\DeclareMathOperator{\R}{\mathbb{R}}
\DeclareMathOperator{\im}{im}
\DeclareMathOperator{\rank}{rank}
\DeclareMathOperator{\spa}{span}
\DeclareMathOperator{\sat}{sat}
\DeclareMathOperator{\ess}{ess}
\DeclareMathOperator{\diag}{diag}
\DeclareMathOperator{\sign}{sign}
\DeclareMathOperator{\ones}{{\ensuremath\mathds{1}}}
\newcommand{\funcRdR}{\ensuremath{{f}}}
\newcommand{\funcRdRd}{\ensuremath{{F}}}
\newcommand{\calG}{\mathcal{G}}
\newcommand{\calI}{\mathcal{I}}
\newcommand{\calV}{\mathcal{V}}
\newcommand{\calE}{\mathcal{E}}
\newcommand{\calF}{\mathcal{F}}
\newcommand{\calR}{\mathcal{R}}
\begin{document}

\begin{frontmatter}

\title{Consensus dynamics with arbitrary sign-preserving nonlinearities\thanksref{footnoteinfo}} 

\thanks[footnoteinfo]{This paper was not presented at any IFAC 
meeting. Corresponding author Jieqiang Wei.}

\author[a1,a2]{J. Wei}\ead{jieqiang@kth.se},     
\author[a1]{A.R.F. Everts}\ead{anneroos@gmail.com},               
\author[a1]{M.K. Camlibel}\ead{M.K.Camlibel@rug.nl},  
\author[a1]{A.J. van der Schaft}\ead{A.J.van.der.Schaft@rug.nl}

\address[a1]{Johann Bernoulli Institute for Mathematics and Comp. Science, Univ. of Groningen, P.O. Box 407, 9700 AK, the Netherlands}  
\address[a2]{ACCESS Linnaeus Centre, School of Electrical Engineering, KTH Royal Institute of Technology, 10044, Stockholm, Sweden}             %

\begin{keyword}                           
Multi-agent systems; Consensus; Nonsmooth analysis; port-Hamiltonian systems.               
\end{keyword}

\begin{abstract}\label{s:Abstract}
This paper studies consensus problems for multi-agent systems defined on directed graphs where the consensus dynamics involves general nonlinear and discontinuous functions. 
Sufficient conditions, only involving basic properties of the nonlinear functions and the topology of the underlying graph, are derived for the agents to converge to consensus.
\end{abstract}

\end{frontmatter}

\section{Introduction}\label{s:Introduction}

Nonlinear agreement protocols have recently attracted the attention of many researchers. They may arise due to the nature of the controller, see e.g. \cite{Jafarian2015,Saber2003}, 
or may describe the physical coupling existing in the network, see e.g. \cite{burger2014duality,Monshizadeh15}. 
In this paper, we consider a general nonlinear consensus protocol.
The topology among the agents is assumed to be a directed graph containing a directed spanning tree, 
which for the linear consensus protocol is known to be a sufficient and necessary condition for reaching state consensus.

Previous work related to this paper can be divided into two categories, depending on whether the dynamical systems are continuous or not. 
For the case of continuous dynamical systems, closely related to this paper are \cite{Papachristodoulou2010} and \cite{lin2007}. 
In \cite{Papachristodoulou2010}, a general first-order consensus protocol with a continuous nonlinear function is considered for the case that there is delay in the communication. 
In \cite{lin2007}, the authors considered a nonlinear consensus protocol with Lipschitz continuous functions, under a switching topology. 
For the case of discontinuous dynamical system, \cite{Cortes2006} is one of the major motivations of this paper. 
Nonlinearities of the form of sign functions were considered in \cite{Cortes2006}, where the notion of Filippov solutions is employed. 
However, in order to guarantee asymptotic consensus of the second network protocol in Section 4 of \cite{Cortes2006}, further conditions turn out to be necessary. 
This is formulated as the main result in Section \ref{ss:node} of this paper. 
In \cite{depersis2013}, the authors considered a similar control protocol as in \cite{Cortes2006} in a hybrid dynamical systems framework with a self-triggered communication policy, which avoids the notion of Filippov solutions. 
In addition, in \cite{depersis2013} practical consensus is considered, that is,  consensus within a predefined margin. 
The results presented in \cite{Cortes2006,depersis2013} are restricted to undirected graphs. 
In \cite{Dimos2010}, the authors considered quantized communication protocols within the framework of hybrid dynamical systems, without using the notion of Filippov solutions.

The contribution of this paper is to provide a uniform framework to analyze the convergence towards consensus of a first-order consensus protocol for a very general class of discontinuous nonlinear functions, under the weakest fixed topology assumption, i.e., a directed graph containing a directed spanning tree. The analysis is conducted with the notion of Filippov solutions, and generalizes and corrects the second network consensus protocol in \cite{Cortes2006}.

The structure of the paper is as follows. In Section \ref{s:Preliminaries}, we introduce some terminology and notation in the context of  graph theory and stability analysis of discontinuous dynamical systems. 
The main results are presented in Theorem~\ref{t:main} and Theorem~\ref{t:combined} in Section~\ref{s:Mainresults}.
The general problem is introduced in Section \ref{ss:Problemformulation}, whereafter in Sections~\ref{ss:node} and \ref{ss:edge} two important subcases are considered, which are then combined in Section \ref{ss:combine}.
In Section \ref{s:PHformulation} we study error dynamics corresponding to the systems considered in Sections~\ref{ss:node}, and provide sufficient conditions for the equivalence between the convergence of the error to zero and the convergence of the original states to consensus. 
Conclusions follow in Section \ref{s:conclusion}.

\section{Preliminaries and notations}\label{s:Preliminaries}
In this section we briefly review some notions from graph 
theory, and give some definitions and notation regarding Filippov solutions.

Let $\mathcal{G}=(\mathcal{V},\mathcal{E},A)$ be a weighted digraph with 
node set $\mathcal{V}=\{v_1,\ldots,v_n\}$, 
edge set $\mathcal{E}\subseteq\mathcal{V}\times\mathcal{V}$,
 and  weighted adjacency matrix $A=[a_{ij}]$ with nonnegative adjacency elements $a_{ij}$. 
An edge of $\mathcal{G}$ is denoted by $e_{ij} := (v_i,v_j)$ and we write $\calI=\{1,2,\ldots,n\}$. 
The adjacency elements $a_{ij}$ are associated with the edges of the graph such that $a_{ij}>0$ if and only if $e_{ji} \in \mathcal{E}$, while $a_{ii}=0$ for all $i \in\calI$. For undirected graphs $A=A^\top$.

The set of neighbors of node $v_i$ is 
denoted by $N_i := \{v_j\in\mathcal{V}:e_{ji}\in\mathcal{E}\}$.
For each node $v_i$, its in-degree and out-degree are defined as
\begin{align*}
  \deg_{\rm in} (v_i) = \sum_{j=1}^n a_{ij},  
\qquad
  \deg_{\rm out}(v_i) = \sum_{j=1}^n a_{ji}.  
\end{align*}
The degree matrix of the digraph $\mathcal{G}$ is a diagonal matrix $\Delta$ 
where $\Delta_{ii}=\deg_{\rm  in}(v_i)$. 
The \emph{graph Laplacian} is 
defined as $L=\Delta-A $ and satisfies $L\ones =0$, where $\ones$ is the $n$-vector containing only ones.
We say that a node $v_i$ is \emph{balanced} if  its  in-degree and out-degree 
are equal. The graph $\mathcal{G}$ is called balanced if all of its nodes are balanced or, 
equivalently, if $\ones^\top L=0$.

A directed path from node $v_i$ to node $v_j$ is a chain of edges from $\calE$ 
such that the first edge starts from $v_i$, the last edge ends at  $v_j$ and 
every edge in between starts where the previous edge ends.
If for every two nodes $v_i$ and $v_j$ there is a directed path from $v_i$ to $v_j$, then the graph $\calG$ is called \emph{strongly connected}.
A subgraph $\calG' = (\calV',\calE',A')$ of $\calG$ is called a \emph{directed 
spanning tree} for $\calG$ if $\calV' =\calV $, $\calE' \subseteq \calE$, and for every node $v_i\in 
\calV'$ there is exactly one node $v_j$ such that $e_{ji}\in \calE'$, except for one 
node, which is called the root of the spanning tree. 
Furthermore, we call a node $v\in \calV$ a \emph{root} of $\calG$ if there is a directed 
spanning tree for $\calG$ with $v$ as a root.
In other words, if $v$ is a root of $\calG$, then there is a directed path from 
$v$ to every other node in the graph.
 A digraph $\calG$ is called {\it weakly
connected} if $\calG^o$ is connected, where  $\calG^o$ is the 
undirected graph obtained from
$\calG$ by ignoring the orientation of the edges. 

The multi-dimensional saturation function $\sat$ and sign function $\sign$ 
are defined as follows. For any $x\in\R^n$,
\begin{align}
 \sat(x\,;u^-,u^+)_i  &= \begin{cases}
u^-_i & \textrm{ if } x_i<u^-_i,\\
x_i & \textrm{ if } x_i\in[u^-_i,u^+_i],\\
u^+_i & \textrm{ if } x_i> u^+_i
\end{cases} 
\,\,\, i\in\calI, \\
\sign(x)_i  & = \begin{cases}
-1 & \textrm{ if } x_i< 0,\\
0 & \textrm{ if } x_i=0,\\
1 & \textrm{ if } x_i> 0,
\end{cases}
\,\,\, i\in\calI,
\end{align}
where $u^-$ and $u^+$ are $n$-vectors containing the lower and upper bounds respectively.

With $\mathbb{R}_-$, $\mathbb{R}_+$ and $\R_{\geqslant 0}$ we denote the sets of 
negative, positive and nonnegative real numbers respectively.
The vectors $e_1,e_2,\ldots,e_n$ denote the canonical basis of $\R^n$. The $i$th row and $j$th column of a matrix $M$ are denoted by $M_{i\cdot}$ and $M_{\cdot j}$ respectively.
For the empty set, we adopt the convention that $\max \emptyset=-\infty$. 

In the rest of this section we give some definitions and notations regarding 
Filippov solutions (see, e.g., \cite{cortes2008}). 
Let $\funcRdRd$ be a map from $\R^n$ to $\R^n$, 
and let $2^{\R^n}$ denote the collection of all subsets of $\R^n$. 
The map $\funcRdRd$ is \emph{essentially bounded} if there is a bound $B$ such that $\|F(x)\|_2< B$ for almost every $x \in \R^n$. 
The map $\funcRdRd$ is \emph{locally essentially bounded} if the restriction of $\funcRdRd$ to every compact subset of $\R^n$ is essentially bounded.
The \emph{Filippov set-valued map} of $\funcRdRd$, denoted
$\calF[\funcRdRd]:\R^n\rightarrow 2^{\R^n}$, is given  as
\begin{equation}
 \calF[\funcRdRd](x)\triangleq
\bigcap_{\delta>0}\bigcap_{\mu(S)=0}\overline{\mathrm{co}}\{\funcRdRd(B(x,
\delta)\backslash S)\},
\end{equation}
where $B(x,\delta)$ is the open ball centered at $x$ with radius $\delta>0$, 
$S\subset\R^n$,
$\mu$ denotes the Lebesgue measure and $\overline{\mathrm{co}}$ denotes the convex closure. The zero measure set $S$ is arbitrarily chosen. Hence, the set $\calF[\funcRdRd](x)$ is independent of the value of $\funcRdRd(x)$.
If $F$ is continuous at $x$, then $ \calF[\funcRdRd](x)=\{F(x)\}$.
A \emph{Filippov solution} of the differential equation $\dot{x}(t)=\funcRdRd(x(t))$ on $[0,t_1]\subset\R$ is
an absolutely continuous function $x:[0,t_1]\rightarrow\R^n$ that 
satisfies the differential inclusion
\begin{equation}\label{e:differential_inclusion}
 \dot{x}(t)\in \calF[\funcRdRd](x(t))
\end{equation}
for almost all $t\in[0,t_1]$.
Let $\funcRdR$ be a map from $\R^n$ to $\R$. We use the same definition of regular function as in \cite{Clarke1990optimization} and recall that convex functions are regular.
If $\funcRdR : \R^n \rightarrow \R$ is locally Lipschitz, then its {\it generalized gradient} 
$\partial 
\funcRdR:\R^n\rightarrow 2^{\R^n}$ is defined by 
\begin{equation}
 \partial \funcRdR(x):=\mathrm{co}\{\lim_{i\rightarrow\infty} \nabla 
\funcRdR(x_i) \mid x_i\rightarrow x, x_i\notin S\cup \Omega_{\funcRdR} \},
\end{equation}
where $\nabla$ denotes the gradient operator, $\Omega_{\funcRdR} \subset\R^n$ denotes the set of points where 
$\funcRdR$ is not differentiable, and $S\subset\R^n$ is an arbitrary set of measure zero. ($\partial \funcRdR(x)$ is independent of the choice of $S$ \cite{Clarke1990optimization}.)
Given a  set-valued map $\calF:\R^n\rightarrow 
2^{\R^n}$, the \emph{set-valued Lie derivative}
$\tilde{\mathcal{L}}_{\calF}\funcRdR:\R^n\rightarrow 2^{\R}$ 
of a locally Lipschitz function $\funcRdR:\R^n\rightarrow \R$  with respect to 
$\calF$ at $x$ is 
defined as
\begin{equation*}
\begin{aligned}
 \tilde{\mathcal{L}}_{\calF}\funcRdR(x) := \{a \mid  \exists \nu\in\calF(x) \textnormal{ s.t. } \zeta^\top\nu=a 
 \ \forall \zeta\in \partial \funcRdR(x)\}.
\end{aligned}
\end{equation*}

A Filippov solution $t\mapsto x(t)$ is \emph{maximal} if it cannot be extended forward in time. Since the Filippov solutions of a discontinuous system \eqref{e:differential_inclusion} are not necessarily unique, we need to specify two types of invariant set. A set $\calR\subset\R^n$ is called \emph{weakly invariant} for \eqref{e:differential_inclusion} if, for each $x_0\in \calR$, at least one maximal solution of \eqref{e:differential_inclusion} with initial condition $x_0$ is contained in $\calR$. Similarly, $\calR\subset \R^n$ is called \emph{strongly invariant} for \eqref{e:differential_inclusion} if, for each $x_0\in \calR$, every maximal solution of \eqref{e:differential_inclusion} with initial condition $x_0$ is contained in $\calR$. For more details, see \cite{cortes2008}.

\section{Main results}\label{s:Mainresults}

\subsection{Problem formulation}\label{ss:Problemformulation}
In this work we consider a network of $n$ agents, who communicate according to a communication 
topology given by a weighted directed graph  
$\calG=(\mathcal{V},\mathcal{E},A)$. 
In this network, agent $i$ receives information from agent $j$ if and only if 
there is an edge from node $v_j$ to node $v_i$ in the graph $\calG$.
We denote the state of agent $i$ at time $t$ as $x_i(t) \in \R$, and consider 
the following dynamics for agent $i$
\begin{equation}\label{e:generalsystem}
\dot x_i = f_i(\sum_{j=1}^n a_{ij}g_{ij}(x_j-x_i)) =: h_i(x), 
\end{equation}
where $f_i$ and $g_{ij}$ are functions, 
$a_{ij}$ are the elements of the adjacency matrix $A$.

Each function $f_i$ describes how agent $i$ handles incoming information,
while $g_{ij}$ are concerned with the flow of information along the 
edges of the graph $\calG$. 
All these functions are nonlinear and may have discontinuities, but we will use the concept of sign-preserving functions.

\begin{defn}  \label{de:sign_preserving}
We say that a function $\varphi:\mathbb{R}\rightarrow\mathbb{R}$ is \emph{sign preserving} if $\varphi(0)=0$ and for each $y \in \R \setminus\{0\}$ we have both $y \varphi(y)>0$ and  $\min y \calF[ \varphi](y)>0$.
\end{defn}

Notice that $y \calF[\varphi] (y) $ attains a minimum since it is closed. Examples of sign-preserving functions are e.g., $\sign$ and $\sat$. 
If a function $\varphi$ has only finitely many discontinuities, e.g. when it is piecewise continuous, then the condition $y \calF[ \varphi](y) >0$ only needs to be checked for its discontinuity points.
The condition $\min y \calF[\varphi] (y) > 0$ for all $y\in \R\setminus\{0\}$,  will be illustrated in Example \ref{ex:reviewerexample}.

Throughout this paper, we assume the following.
\begin{assum} \label{as:signAndPWC}
  The functions $f_i$ and $g_{ij}$ are sign preserving, Lebesgue measurable, and locally essentially bounded.  
\end{assum}
To handle possible discontinuities in the right-hand side of 
\eqref{e:generalsystem}, 
we consider Filippov solutions of the differential inclusion
\begin{equation}\label{e:generalsystem_fili}
\dot{x}(t)\in\calF[h](x(t)).
\end{equation}

The existence of a Filippov solution for each initial condition is guaranteed by the Lebesgue measurability and the local essential boundedness of the functions $f_i$ and $g_{ij}$ in Assumption \ref{as:signAndPWC}. In this paper we assume the completeness of Filippov solutions of \eqref{e:generalsystem_fili} for any initial condition. Notice that when the functions $h_i$ are globally bounded, the completeness of Filippov solution of \eqref{e:generalsystem_fili} is guaranteed by Theorem 1 in Chapter 2 $\S$7 in \cite{filippov1988}.  Moreover, by property 3 of Theorem 1 in \cite{paden1987}, we have 
\begin{equation}\label{e:Cartesian product}
\calF[h](x(t))\subset \bigtimes_{i=1}^n \calF[h_i](x(t)). \footnote{$\bigtimes$ denotes the Cartesian product.}
\end{equation} 

The agents of the network are said to achieve \emph{consensus} if they all converge to the same value, that is, 
$\lim_{t\rightarrow \infty} x(t) = \eta \ones $  
for some \emph{constant} $\eta \in \R$, where $x(t) = [x_1(t),\ldots, x_n(t)]^\top$ is a solution of \eqref{e:generalsystem} with $x(0) = x_0$.
It is well known that if all functions $f_i$ and $g_{ij}$ are the identity function, in which case \eqref{e:generalsystem} boils down to the linear consensus protocol,  then the agents 
will achieve consensus iff the graph $\calG$ contains a 
directed spanning tree \cite{Agaev2005,Ren2005}. 
In this work, we investigate the consensus problem for general functions $f_i$ and $g_{ij}$ satisfying Assumption \ref{as:signAndPWC}.
First, in Section \ref{ss:node}, we consider the special case that 
the functions $g_{ij}$ are equal to the identity function, that is 
$\dot{x}_i=f_i(\sum_{j=1}^n a_{ij}(x_j-x_i))$. 
Thereafter, in Section \ref{ss:edge}, we consider the case where the 
functions $f_i$ are the identity function, that is $\dot x_i = \sum_{j=1}^n 
a_{ij}g_{ij}(x_j-x_i)$.
Finally, in section \ref{ss:combine}, we will combine these results.

The following examples motivate the sign-preserving condition by showing what happens if the functions $f_i$ and $g_{ij}$ do not satisfy this property.

\begin{exmp}\label{ex:saturation}
	Consider the following system defined on the graph given in Fig.~\ref{fig:di-2nodesA}
	\begin{equation}\label{e:2node_digraph}
	\begin{aligned}
	\dot{x}_1 &= f_1(0) \\
	\dot{x}_2 &= f_2(x_1-x_2)，
	\end{aligned}
	\end{equation}
	with $f_i(y)=\sat(y ;0,1)$ for $i = 1,2$. 
	Notice that  $f_i$ satisfies $y f_i(y)=0$ for all $y < 0$, and hence $f_i$ is not sign preserving. In this case the existence of a directed spanning tree is not a sufficient condition for 
	convergence to consensus. Indeed, if the initial condition satisfies 
	$x_2(0)>x_1(0)$, then $x_1(t)= 
	x_1(0)$ and $x_2(t)= x_2(0)$ for all $t\geq0$. Hence, the agents do not reach consensus.
\end{exmp}

\begin{exmp} \label{ex:reviewerexample}
Consider the system \eqref{e:2node_digraph} defined 
on the digraph in Fig.~\ref{fig:di-2nodesA} with $f_i$ given by
\begin{equation}
f_i(y)  = \begin{cases}
y+1 & \textrm{ if } y<-1\\
y & \textrm{ if } y \in [-1,1], \\
y-1 & \textrm{ if } y> 1
\end{cases}
 \;\;i=1,2.
\end{equation}
Then the function $f_i$ satisfies $f_i(0) =0$ and $y f_i(y) >0$ for all $y \neq 0$. However, since $\calF[f_i](1) = [0,1]$ and $\calF[f_i](-1) = [-1,0]$, we have that $\min y \calF[f_i](y) >0$ is not satisfied for $y=\pm 1$. 
Hence, $f_i$ is not sign preserving. Consider the point $x^*=[0,1]^\top$, we have 
\begin{equation*}
\calF[h](x^*)= \overline{\mathrm{co}}\{ [0,-1]^\top,[0,0]^\top \},
\end{equation*}
which contains the point $[0,0]^\top$.
Consequently, $x^*$ is an equilibrium point of the differential inclusion $\dot{x}(t)\in\calF[h](x(t))$. For example, the trajectory
\begin{align*}
x_1(t) = 0, \; x_2(t) = 1 + e^{-t}
\end{align*}
is a solution of \eqref{e:2node_digraph} which converges to $x^*$. Therefore, the agents do not reach consensus.
\end{exmp}

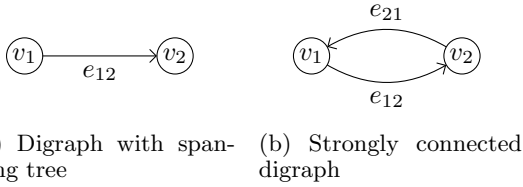
\begin{figure}
        \centering
	\begin{subfigure}[b]{3.5cm}
                \begin{tikzpicture}
		\tikzstyle{EdgeStyle}    = [thin]
		\useasboundingbox (0,0) rectangle (3.5cm,1.5cm);
		\tikzstyle{VertexStyle} = [ minimum size = 20pt,
					                inner sep  = 1pt, 
					                draw = black]

		\Vertex[style={minimum
		size=0.3cm,shape=circle},LabelOut=false,L=\hbox{$v_1$},x=0.7cm,y=0.7cm]{1}
		\Vertex[style={minimum
		size=0.3cm,shape=circle},LabelOut=false,L=\hbox{$v_2$},x=2.7cm,y=0.7cm]{2}
		\draw
		(1) edge[->,>=angle 90,thin,
		]
		node[below]{$e_{12}$} (2);
		\end{tikzpicture}
                \caption{Digraph with spanning tree}
                \label{fig:di-2nodesA}
        \end{subfigure}
        ~ 
        \begin{subfigure}[b]{3.5cm}
                \begin{tikzpicture}
		\tikzstyle{EdgeStyle}    = [thin,
		]
		\useasboundingbox (0,0) rectangle (3.5cm,1.5cm);
		\tikzstyle{VertexStyle} = [draw = black, 					  minimum size = 20pt,%
					  inner sep       = 1pt,]
		\Vertex[style={minimum
		size=0.3cm,shape=circle},LabelOut=false,L=\hbox{$v_1$},x=0.7cm,y=0.7cm]{v1}
		\Vertex[style={minimum
		size=0.3cm,shape=circle},LabelOut=false,L=\hbox{$v_2$},x=2.7cm,y=0.7cm]{v2}
		\draw
		(v1) edge[bend right,->,>=angle 90,thin]
		node[below]{$e_{12}$} (v2)
		(v2) edge[bend right,->,>=angle 90,thin]
		node[above]{$e_{21}$} (v1);
		\end{tikzpicture}
                \caption{Strongly connected digraph}	
                \label{fig:di-2nodesB}
        \end{subfigure}
        ~ 
        \caption{Two digraphs for Examples \ref{ex:saturation}, \ref{ex:twonodesdigraph}, and \ref{ex:tworootsdigraph}.}
	\label{fig:di-2nodes}
\end{figure}

\subsection{Node nonlinearity} \label{ss:node}
We first consider the system \eqref{e:generalsystem} where the functions $g_{ij}$ are all the identity function, 
and focus our attention on the functions $f_i$, which describe how agent $i$ handles the incoming information flow.
In this case, the total dynamics of the agents can be written as 
\begin{equation}\label{e:node_nonlinear}
 \dot{x} = f(-Lx),
\end{equation}
where $L$ is the graph Laplacian induced by the information flow  digraph $\calG = (\calV,\calE,A)$,
and $f(y)=[f_1(y_1),f_2(y_2),\ldots, f_n(y_n)]^\top, \forall y\in\R^n$.
In this case we consider Filippov solutions of the differential inclusion
\begin{equation}\label{e:node_nonlinear_fili}
\dot{x}(t)\in\calF[h](x(t)),
\end{equation}
where $h(x) = f(-Lx)$. 
Note that, since $L$ is a singular matrix, we have $\calF[h](x(t))\neq\calF[f](-Lx(t))$ in general.

The aim of this section is to investigate under which conditions the Filippov 
solutions of the system \eqref{e:node_nonlinear_fili} achieve consensus.
Because of possible discontinuity of the right-hand side 
of \eqref{e:node_nonlinear}, there can be Filippov solutions of 
\eqref{e:node_nonlinear_fili} 
that are unbounded. The following example illustrates this unwanted behavior.

\begin{exmp}\label{example_undi_3nodes}
 Consider a dynamical system \eqref{e:node_nonlinear} 
defined on an undirected graph as given in Fig.~\ref{fig:3nodesA}, 
where the functions $f_i$ are all signum function.
Suppose $x(t_0) \in\spa\{\ones\}$ at time $t_0$, then   
\begin{equation}\label{e:filippovset-example}
\calF[h](x(t_0))=\overline{\mathrm{co}}\left\{ 
\nu_1,\nu_2,\nu_3,-\nu_1,-\nu_2,-\nu_3 \right\},
\end{equation}
where $\nu_1=[1,1,-1]^\top$, $\nu_2=[1,-1,1]^\top$, and $\nu_3=[-1,1,1]^\top$. Since 
$\sum_{i=1}^3 \frac{1}{3} \nu_i =\frac{1}{3}\ones$, we have that 
$\{\eta\ones\mid 
\eta\in[-\frac{1}{3},\frac{1}{3}]\}\subset\calF[h](x(t_0))$. Hence, any function 
$x(t)=\eta(t)\ones$ with $\eta(t)$ differentiable almost everywhere and satisfying $\dot{\eta}(t)\in[-\frac{1}{3},\frac{1}{3}]$ 
is a Filippov solution, e.g., $x(t) = \frac{1}{3}t \mathds{1}$ and $x(t) =  \frac{1}{3} \sin{(t)} \mathds{1}$ which exhibit sliding consensus. 
\end{exmp}

\begin{figure}
        \centering
        \begin{subfigure}[b]{3.5cm}
                \begin{tikzpicture}
		\tikzstyle{EdgeStyle}    = [thin]
		\useasboundingbox (0,0) rectangle (3.5cm,2.2cm);
		\tikzstyle{VertexStyle} = [draw=black,
					  minimum size = 20pt,
					  inner sep       = 1pt,]
		\Vertex[style={minimum
		size=0.3cm,shape=circle},LabelOut=false,L=\hbox{$v_1$},x=1cm,y=0.4cm]{1}
		\Vertex[style={minimum
		size=0.3cm,shape=circle},LabelOut=false,L=\hbox{$v_2$},x=3cm,y=0.4cm]{2}
		\Vertex[style={minimum
		size=0.3cm,shape=circle},LabelOut=false,L=\hbox{$v_3$},x=2cm,y=2.13cm]{3}
		\draw
		(1) edge[-,>=angle 90,thin]
		node[below]{} (2)
		(2) edge[-,>=angle 90,thin]
		node[right]{} (3)
		(3) edge[-,>=angle 90,thin]node[left]{} (1);
		\end{tikzpicture}
                \caption{Undirected graph}
                \label{fig:3nodesA}
        \end{subfigure}
        ~ 
        \begin{subfigure}[b]{3.5cm}
               \begin{tikzpicture}
	      \tikzstyle{EdgeStyle}    = [thin]
	      \useasboundingbox (0,0) rectangle (3.5cm,2.2cm);
	      \tikzstyle{VertexStyle} = [draw = black,
					minimum size = 20pt,
					inner sep       = 1pt,]
	      \Vertex[style={minimum
	      size=0.3cm,shape=circle},LabelOut=false,L=\hbox{$v_1$},x=1cm,y=0.4cm]{1}
	      \Vertex[style={minimum
	      size=0.3cm,shape=circle},LabelOut=false,L=\hbox{$v_2$},x=3cm,y=0.4cm]{2}
	      \Vertex[style={minimum
	      size=0.3cm,shape=circle},LabelOut=false,L=\hbox{$v_3$},x=2cm,y=2.13cm]{3}
	      \draw
	      (1) edge[->,>=angle 90,thin]
	      node[below]{} (2)
	      (2) edge[->,>=angle 90,thin]
	      node[right]{} (3)
	      (3) edge[->,>=angle 90,thin]node[left]{} (1);
	      \end{tikzpicture}
                \caption{Directed graph}
                \label{fig:3nodesB}
        \end{subfigure}
        ~ 
        \caption{Two graphs with three nodes used in Examples 
\ref{example_undi_3nodes}, \ref{ex.condition(i)}, \ref{ex:counterexmaple_edgenonlinear_undirected} 
and \ref{ex:counterexmaple_edgenonlinear_directed}.}
	\label{fig:3nodes}
\end{figure}
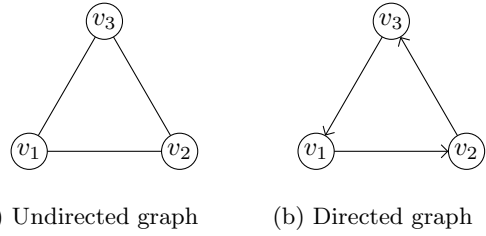

The undesirable behavior $x(t) = \eta(t)\ones$ with $\eta(t)$ a nonconstant function in the previous example will be called \emph{sliding consensus}. Sliding consensus arises whenever  $\eta \ones$ is contained in $\calF [h] (\alpha \ones)$ for some scalars $\eta \neq 0$ and sufficiently many $\alpha$.  Note that this example shows that for the validity of Theorem 11 (ii) in \cite{Cortes2006} we need extra conditions. A counter example to Theorem 11 (i) in \cite{Cortes2006} can be constructed similarly to Example \ref{example_undi_3nodes}.
In fact, it will turn out that the sliding consensus can be excluded by 
replacing the signum function for at least one node by a  function that is continuous at the origin, for example the logarithmic quantizer. This motivates the introduction of the following subsets of the index set $\calI$ corresponding to the digraph $\calG$:
\begin{align*}
\calI_r &=\{i\in\calI \mid v_i \text{ is a root of }\calG\},\\
\calI_c &=\{i\in\calI \mid f_i \text{ is continuous at the origin}\}. 
\end{align*}

Before we present the main result of this section, we first state a preparatory lemma.

\begin{lem}[Prop. 2.2.6, 2.3.6 in \cite{Clarke1990optimization}]\label{regular_Lipschitz_lyapunov}
The following functions are regular and Lipschitz continuous,
\begin{equation}\label{e:VandW}
 V(x):= \max_{i\in\calI} x_i, \qquad W(x):= -\min_{i\in\calI} x_i. 
\end{equation} 
\end{lem}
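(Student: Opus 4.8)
The plan is to prove that $V(x)=\max_i x_i$ and $W(x)=-\min_i x_i$ are locally Lipschitz and regular by exhibiting them as pointwise maxima of finitely many smooth functions and invoking the cited results from Clarke's book. First I would note that each coordinate projection $x\mapsto x_i$ is linear, hence $C^1$, hence locally Lipschitz and regular (a $C^1$ function is regular in the sense of Clarke, Prop.~2.3.6). Then $V(x)=\max_{i\in\calI}x_i$ is the pointwise maximum of the finite family $\{x\mapsto x_i\}_{i\in\calI}$.

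The key step is to apply Clarke's result on maxima: the pointwise maximum of a finite collection of locally Lipschitz functions is again locally Lipschitz (this is elementary, since $|\max_i a_i - \max_i b_i|\le \max_i |a_i-b_i|$), and if in addition every function in the collection is regular, then the maximum is regular (Prop.~2.3.12 in Clarke; here the relevant consequence is packaged in the propositions cited in the lemma statement). Applying this to the family of coordinate projections gives that $V$ is regular and locally Lipschitz — in fact globally Lipschitz with constant $1$ in the $\infty$-norm, hence Lipschitz in the Euclidean norm as well. For $W$, I would observe that $W(x)=-\min_i x_i = \max_i(-x_i)$, which is the pointwise maximum of the linear (hence $C^1$, hence regular) functions $x\mapsto -x_i$, so the same argument applies verbatim.

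For concreteness I would also record the generalized gradients, since they are what the subsequent stability analysis will actually use: with $I_V(x)=\{i\in\calI \mid x_i = V(x)\}$ the set of indices achieving the maximum, Clarke's formula gives $\partial V(x)=\mathrm{co}\{e_i \mid i\in I_V(x)\}$, and similarly $\partial W(x)=\mathrm{co}\{-e_i \mid i\in I_W(x)\}$ where $I_W(x)=\{i\in\calI\mid x_i=-W(x)\}$. This is optional for the lemma as stated but useful to have in place.

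There is essentially no obstacle here: the statement is a direct citation of two propositions from Clarke, and the only thing to check is that the ingredients (coordinate projections) are regular and Lipschitz and that the index family is finite, both of which are immediate. The one point worth stating carefully is that regularity is preserved under finite $\max$ of regular functions — this is the substantive content being borrowed — and that $\min$ is handled by rewriting it as a $\max$ after negation rather than trying to apply a "$\min$ of regular functions" statement (which would be false in general, since $\min$ of regular functions need not be regular).
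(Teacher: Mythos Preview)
Your proposal is correct. The paper does not give its own proof of this lemma; it simply records the statement with a citation to Clarke, and your argument is a faithful unpacking of that citation: coordinate projections are $C^1$ (hence regular and Lipschitz), and a finite pointwise maximum of regular Lipschitz functions is again regular and Lipschitz, with $W$ handled by rewriting $-\min_i x_i$ as $\max_i(-x_i)$.

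One remark on alignment with the paper: in its preliminaries the paper explicitly notes that convex functions are regular, so the intended one-line argument is arguably even shorter than yours --- both $V$ and $W$ are pointwise maxima of linear functions, hence convex, hence regular and (being finite convex functions on $\R^n$) locally Lipschitz. Your route via ``max of regular functions is regular'' is equally valid and yields the same generalized-gradient formula $\partial V(x)=\mathrm{co}\{e_i\mid i\in I_V(x)\}$ that the paper uses in the proof of Theorem~\ref{t:main}. Your caution that one must treat $W$ as a max after negation (since a minimum of regular functions need not be regular) is exactly right and worth keeping.
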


\begin{thm}\label{t:main}
Consider system \eqref{e:node_nonlinear_fili} defined on a digraph 
$\calG=(\calV,\calE,A)$. If one of the following conditions holds, 
\begin{enumerate}[(i)]
\item $\calI_c\cap\calI_r$ is not empty,
\item  $|\calI_r|=1$,
\item 
$|\calI_r|=2$, $f_i(0^-)$ and $f_i(0^+)$ exist, and $f_i(0^-)=-f_i(0^+)$ for $i\in\calI_r$, 
\end{enumerate}
then all the trajectories of system \eqref{e:node_nonlinear_fili} achieve 
consensus asymptotically, for any initial condition. Furthermore, they will remain in the set $[\min_ix_i(0),\max_i x_i(0)]^n$ for all $t\geq0$.
\end{thm}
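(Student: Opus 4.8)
The plan is to prove the theorem by a nonsmooth LaSalle argument built on the pair of functions of Lemma~\ref{regular_Lipschitz_lyapunov}, $V(x)=\max_i x_i$ and $W(x)=-\min_i x_i$ (regular, Lipschitz, with $\partial V(x)=\overline{\mathrm{co}}\{e_i:i\in I_{\max}(x)\}$ and $\partial W(x)=\overline{\mathrm{co}}\{-e_j:j\in I_{\min}(x)\}$, where $I_{\max}(x)$ and $I_{\min}(x)$ are the index sets of maximal, respectively minimal, components of $x$). Two facts are used throughout: (a) if $\deg_{\rm in}(v_i)>0$ then $x\mapsto(-Lx)_i$ is a nonzero linear functional, hence $\calF[h_i](x)=\calF[f_i]((-Lx)_i)$, whereas if $\deg_{\rm in}(v_i)=0$ then $h_i\equiv0$; (b) sign-preservation gives $\calF[f_i](y)\subset\R_-$ for $y<0$ and $\calF[f_i](y)\subset\R_+$ for $y>0$.

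\emph{Step 1 (the box $[\min_i x_i(0),\max_i x_i(0)]^n$ is strongly invariant).} I would show $\sup\tilde{\mathcal L}_{\calF[h]}V(x)\le0$ and $\sup\tilde{\mathcal L}_{\calF[h]}W(x)\le0$ for all $x$; since $\frac{d}{dt}V(x(t))\in\tilde{\mathcal L}_{\calF[h]}V(x(t))$ a.e.\ along any Filippov solution, this makes $V$ and $W$ nonincreasing, so every solution stays in the box (this is the ``furthermore'' claim and yields boundedness). For $\sup\tilde{\mathcal L}_{\calF[h]}V(x)\le0$, take $a$ in it with witness $\nu\in\calF[h](x)$, so $\nu_i=a$ for all $i\in I_{\max}(x)$, each of which has $(-Lx)_i=\sum_j a_{ij}(x_j-x_i)\le0$. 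If some maximal $i$ has $(-Lx)_i<0$ then $\nu_i<0$ by (a)--(b), so $a<0$; if some maximal $i$ has $\deg_{\rm in}(v_i)=0$ then $\nu_i=0$, so $a=0$. Otherwise $R:=I_{\max}(x)$ receives no incoming edge from outside; walking a spanning-tree path backwards into $R$ forces $\calI_r\subseteq R$, and since $h_i$ ($i\in R$) depends only on $\{x_j:j\in R\}$, the block $(\nu_i)_{i\in R}$ lies in $\calF[\tilde h]((\max_k x_k)\ones_R)$, where $\tilde h$ is the restricted system of the form \eqref{e:node_nonlinear} on the subgraph of $\calG$ induced by $R$, whose root set is again $\calI_r$. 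Thus it suffices to prove: if $\alpha\ones\in\calF[\tilde h](c\ones)$ for a digraph whose root set satisfies (i), (ii) or (iii), then $\alpha=0$. Under (i), the root $v_r\in\calI_c$ makes $\tilde h_r$ continuous at $c\ones$ with value $f_r(0)=0$, so $\alpha=0$. Under (ii), the unique root has in-degree $0$ (else an in-neighbour of it would be unreachable from it), so $\tilde h_r\equiv0$ and $\alpha=0$. Under (iii), the two roots $v_p,v_q$ form the source strongly connected component, hence a $2$-cycle, so near $c\ones$ the $p$- and $q$-components of $-\tilde Lx$ are $a_{pq}(x_q-x_p)$ and $a_{qp}(x_p-x_q)$, of opposite sign; hence the $\{p,q\}$-block of $\calF[\tilde h](c\ones)$ is the segment joining $(f_p(0^+),f_q(0^-))$ and $(f_p(0^-),f_q(0^+))$, and $\nu_p=\nu_q=\alpha$ together with $f_p(0^-)=-f_p(0^+)$, $f_q(0^-)=-f_q(0^+)$ (with $f_p(0^+),f_q(0^+)>0$, as otherwise (i) applies) forces $\alpha=0$. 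The bound for $W$ is symmetric.

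\emph{Step 2 (the $\omega$-limit set is a single consensus point).} Solutions are bounded by Step~1, so the $\omega$-limit set $\Omega$ of any solution is nonempty, compact, weakly invariant, and, $V$ and $W$ being nonincreasing, constant on $\Omega$, say $V\equiv V^*$, $W\equiv W^*$. Fix $z\in\Omega$ and a maximal solution $y$ with $y(0)=z$ staying in $\Omega$; then $V(y(\cdot))\equiv V^*$, $W(y(\cdot))\equiv W^*$, so $0\in\tilde{\mathcal L}_{\calF[h]}V(y(t))$ and $0\in\tilde{\mathcal L}_{\calF[h]}W(y(t))$ for a.e.\ $t$, and I pick such a $t$ and set $z^*=y(t)$. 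By the Step~1 analysis, $0\in\tilde{\mathcal L}_{\calF[h]}V(z^*)$ forces $I_{\max}(z^*)$ to receive no incoming edge from outside, hence $\calI_r\subseteq I_{\max}(z^*)$; symmetrically $\calI_r\subseteq I_{\min}(z^*)$. Taking any root $v_r$ gives $z^*_r=V^*$ and $z^*_r=-W^*$, so $V^*=-W^*$; consequently every $z\in\Omega$ has $\max_i z_i=\min_i z_i$, i.e.\ $\Omega=\{V^*\ones\}$. Hence $x(t)\to V^*\ones$ with $V^*\in[\min_i x_i(0),\max_i x_i(0)]$, the asserted consensus.

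\emph{Main obstacle.} The crux is the last case of Step~1: when the set of maximal nodes receives no incoming edge, the componentwise overestimate \eqref{e:Cartesian product} permits positive elements of $\calF[f_i](0)$ and is useless, so one must use the genuine coupling through $L$ by descending to the consensus point of the subgraph induced by $R$---exactly the configuration that hypotheses (i)--(iii) exclude. Verifying that each of (i), (ii), (iii) precludes this ``sliding consensus'', together with the structural facts behind it (the root set is the unique source strongly connected component; $|\calI_r|=1$ forces the root to have in-degree $0$; $|\calI_r|=2$ forces the roots to form a $2$-cycle source), is where the real work lies.
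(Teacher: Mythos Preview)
Your proposal is correct and follows essentially the same route as the paper: the same Lyapunov pair $V=\max_i x_i$, $W=-\min_i x_i$, the same dichotomy (either some maximal node has a strictly smaller in-neighbour, giving $a<0$, or the roots are all maximal and one shows $a=0$ under (i)/(ii)/(iii)), and the same LaSalle-type invariance conclusion. The only cosmetic differences are that you frame the ``all roots maximal'' case as a reduction to the induced subgraph on $R=I_{\max}(x)$ (the paper obtains the same effect via \eqref{e:rootneighbors}, noting that a root's row of $L$ only involves root indices) and that you argue the $\omega$-limit step directly rather than invoking the nonsmooth LaSalle theorem from \cite{cortes2008}.
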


\begin{proof}
Notice that in all three cases the index set $\calI_r$ is nonempty, which implies that the graph $\calG$ contains a 
directed spanning tree. Condition~{\it (i)} implies that the digraph 
$\calG$ has a root $v_i$ for which $f_i$ is continuous at the origin.

Consider candidate Lyapunov functions $V$ and $W$ as given in \eqref{e:VandW}.  
Let $x(t)$ be a trajectory of \eqref{e:node_nonlinear_fili} and define 
\[\alpha(t)=\{k\in\calI\mid x_k(t)=V(x(t))\}.\] 
The generalized gradient of $V$ is given as \cite[Example 2.2.8]{Clarke1990optimization}
\begin{equation}
\partial V(x(t))  = \mathrm{co}\{e_k\in\R^n \mid k \in \alpha(t) \}.
\end{equation}

Let $\Psi$ be defined as 
\begin{equation}
\Psi = \{t\geq 0 \mid \textnormal{both } \dot{x}(t) \textnormal{ and } \frac{d}{dt}V(x(t)) \textnormal{ exist} \}.
\end{equation}
Since $x$ is absolutely continuous and $V$ is locally Lipschitz, we have that  $\Psi=\R_{\geq 0}\setminus\bar{\Psi}$ for a set $\bar{\Psi}$ of measure zero. By Lemma 1 in \cite{Bacciotti1999}, we have 
\begin{equation}
\frac{d}{dt}V(x(t))\in \tilde{\mathcal{L}}_{\calF[h]}V(x(t))
\end{equation}
for all $t\in\Psi$ and hence that the set $\tilde{\mathcal{L}}_{\calF[h]}V(x(t))\neq\emptyset$ for all $t\in\Psi$. 
For $t\in\bar{\Psi}$, we have $\tilde{\mathcal{L}}_{\calF[h]}V(x(t))$ is empty, hence $\max \tilde{\mathcal{L}}_{\calF[h]}V(x(t))< 0$.
For $t \in \Psi$, let $a\in\tilde{\mathcal{L}}_{\calF[h]}V(x(t))$. 
By definition, there exists a $\nu^a \in \calF[h](x(t))$ such that $a = \nu^a\cdot \zeta$ for all 
$\zeta\in\partial V(x(t))$. Consequently, by choosing $\zeta = e_k$ for $k \in \alpha(t)$, we observe that $\nu^a$ satisfies 
\begin{equation} \label{e:nu-alpha} \nu_k^a  = a \qquad \forall k \in \alpha(t). \end{equation} 
Next, we want to show that $\max \tilde{\mathcal{L}}_{\calF[h]}V(x(t))\leq 0$ for all $t\in\Psi$ by considering two possible cases: $\calI_r\nsubseteq\alpha(t)$ or 
$\calI_r\subseteq\alpha(t)$. 

If $\calI_r\nsubseteq\alpha(t)$, then there exists an $i\in\calI_r$ such that $x_i(t)<V(x(t))$. 
Furthermore, since $v_i$ is a root, we can choose an index $j\in\alpha(t)$ such that the shortest path from $v_i$ to $v_j$ has the least number of edges. By our choice of $j$, there is at least one edge $e_{kj} \in \calE $ such that $x_k(t)<x_j(t)$, which implies that we have $-L_{j\cdot}x(t)<0$.
Moreover, the existence of an edge $e_{kj}$ implies that $\rank L_{j\cdot}=1$, which together with property 4 of Theorem 1 in \cite{paden1987} gives us
\begin{equation}
\calF[h_i](x(t))=\calF[f_j](-L_{j\cdot}x(t)).
\end{equation}
By the sign-preserving property of $f_j$ and $-L_{j\cdot} x(t) <0$, we have that $\calF[h_j](x(t))\subset\R_-$. By \eqref{e:Cartesian product}, we find that $\nu_j<0$ for any $\nu\in\calF[h](x(t))$.
Using observation \eqref{e:nu-alpha} for $k=j$, we see that every $a\in\tilde{\mathcal{L}}_{\calF[h]}V(x(t))$ satisfies $a<0$. By the fact that  $\tilde{\mathcal{L}}_{\calF[h]}V(x(t))$ is a closed set, we have $\max \tilde{\mathcal{L}}_{\calF[h]}V(x(t))<0$.

If $\calI_r\subseteq\alpha(t)$, we will consider the conditions {\it (i), (ii)} and {\it 
(iii)} separately and prove that $\tilde{\mathcal{L}}_{\calF[h]}V(x(t))=\{0\}$.
First, we note that if a node $v_k$ is a root, then $e_{jk} \in \calE$ implies that $v_j$ is a root as well, and hence we have 
\begin{equation} \label{e:rootneighbors}
(-Lx)_k = \sum_{j\in\calI}a_{kj}(x_j-x_k) = \sum_{j\in\calI_r}a_{kj}(x_j-x_k).
\end{equation}
\begin{enumerate}[(i)]
\item  
In this case $\calI_c\cap\calI_r\subseteq \alpha(t)$. 
For any $i\in\calI_c\cap\calI_r$, we have that $f_i$ is continuous at $0$ and satisfies $f_i(0)=0$. This implies that any $\nu\in\calF[h](x(t))$ satisfies $\nu_i=0$.
Using observation \eqref{e:nu-alpha}, we can conclude that $\tilde{\mathcal{L}}_{\calF[h]}V(x(t))=\{0\}$. 

\item 
Let $\calI_r=\{i\}$.
Since there is only one root in this case, namely $v_i$, we have $L_{i\cdot}=0$ and hence  $f_i((L x(t))_i ) = f_i(0) = 0$ for all $t$. 
Consequently, each $\nu\in\calF[h](x(t))$ satisfies $\nu_i=0$. Using observation \eqref{e:nu-alpha} again, we see that $\tilde{\mathcal{L}}_{\calF[h]}V(x(t))=\{0\}$.

\item  Let $\calI_r=\{i,j\}$. By \eqref{e:rootneighbors}, the dynamics of $x_i$ and $x_j$ are 
given as $\dot{x}_i=f_i(a_{ij}(x_j-x_i))$ and $\dot{x}_j=f_j(a_{ji}(x_i-x_j))$ respectively. 

Since $\calI_r\subseteq\alpha(t)$, we have $x_i(t) = x_j(t)$ and hence any $\nu\in\calF[h](x(t))$ satisfies 
\begin{align*}
[\nu_i, \nu_j]
&\subseteq 
\overline{\mathrm{co}} \{
 [f_i(0^-), f_j(0^+)], 
 [f_i(0^+),  f_j(0^-)]
\} \nonumber  \\
&= 
\overline{\mathrm{co}} \{
[f_i(0^-), f_j(0^+)], -
[f_i(0^-), f_j(0^+)]  \}, 
\end{align*}
where the last equality is implied by condition {\it (iii)}.  This implies that the convex set given in the above equation 
is a line segment that only crosses $\spa\{[1,1]^\top \}$ in the origin. This implies that any $\nu \in \calF[h](x(t))$ with $\nu_i=\nu_j$ must satisfy $\nu_i=\nu_j=0$. Using $\partial V(x(t))  = \mathrm{co}\{e_i, e_j\in\R^n \}$ and \eqref{e:nu-alpha}, we see that $\tilde{\mathcal{L}}_{\calF[h]}V(x(t))=\{0\}$.
\end{enumerate}

Define $\beta(t)=\{i\in\calI\mid x_i(t)=-W(x(t))\}$. By using similar computations and observations, we find that $ 
\max \tilde{\mathcal{L}}_{\calF[h]}W(x(t))<0$ if $\calI_r\nsubseteq\beta(t)$,  
and  $\max \tilde{\mathcal{L}}_{\calF[h]}W(x(t))\leqslant 0$ if 
$\calI_r\subseteq\beta(t)$.

We conclude that $V(x(t))$ and $W(x(t))$ are not increasing along the trajectories $x(t)$ of the 
system (\ref{e:node_nonlinear_fili}). Hence, the trajectories are bounded and remain in the set $[\min_i x_i(0),\max_i x_i(0)]^n$ for all $t\geq0$. 
Therefore, for any $N\in\R_+$, the set $S_N=\{x\in\R^n \mid \|x\|_{\infty}\leqslant N\}$ is 
strongly invariant for (\ref{e:node_nonlinear_fili}). 
By Theorem 2 in \cite{cortes2008}, we 
have that all solutions of (\ref{e:node_nonlinear_fili}) starting at $S_N$ 
converge to 
the largest weakly invariant set $M$ contained in 
\begin{equation}
\begin{aligned}
 S_N & \cap\overline{\{x\in\R^n \mid 
0\in\tilde{\mathcal{L}}_{\calF[h]}V(x)\}} \\
& \cap\overline{\{x\in\R^n \mid 
0\in\tilde{\mathcal{L}}_{\calF[h]}W(x)\}}.
\end{aligned}
\end{equation}
From the argument above we see that $0\in\tilde{\mathcal{L}}_{\calF[h]}V(x(t))$  is only possible if $\calI_r \subseteq \alpha(t)$, and 
$0\in\tilde{\mathcal{L}}_{\calF[h]}W(x(t))$ can only happen if $\calI_r \subseteq \beta(t)$. 
This implies that for every root $v_i$, the state $x_i$ converges simultaneously to the maximum and to the minimum, i.e., the trajectories $x(t)$ of the system achieve consensus for any initial condition.
\end{proof}

\begin{rem}
Here we interpret condition {\it (i)}, {\it (ii)} and {\it (iii)} in Theorem \ref{t:main}. 
It can be seen from the proof of Theorem \ref{t:main} that the sliding consensus can be introduced only by the behaviors of the agents in $\calI_r$, while the agents in $\calI\setminus \calI_r$ just track the trajectories of the root agents. The condition {\it (i)} is a sufficient condition for the general case, namely $\calI_c\cap\calI_r \neq \emptyset$. 
In this case, there is at least one root for which the input converges to zero as all the agents converge to consensus, which prevents sliding behavior.
The conditions {\it (ii)} and {\it (iii)} are provided for two special cases,
 i.e., $|\calI_r|=1$ and $|\calI_r|=2$. In these cases, we allow $\calI_r\cap\calI_c=\emptyset$. When there is only one root in the graph, the states of the other agents will converge to the state of the root which is constant along the trajectory, excluding sliding behavior. 
If there are two roots, $v_i$ and $v_j$, and both $f_i$ and $f_j$ are not continuous at the origin, then condition {\it (iii)} gives another way to eliminate sliding consensus: the limits of $f_i$ and $f_j$ for $t \rightarrow 0^+$ should be opposite to the limits for $t \rightarrow 0^-$. 
For example, $f_i=f_j=\sign$ is one of this type of protocols. 
\end{rem}

\begin{rem}
The set  $\calI_c$ 
 can be enlarged such that it contains all the functions which are \emph{essentially continuous} at the origin, i.e., $\ess \lim_{x_i\rightarrow 0^-} f_i(x_i)=\ess \lim_{x_i\rightarrow 0^+} f_i(x_i)=0$ (for definitions see e.g. \cite{Arutyunov2000,chung2006markov}). 
This can be done since in the definitions of both essential limits and Filippov set-valued map, any zero measure set can be excluded.
For condition {\it (iii)} in Theorem \ref{t:main}, the same extension is possible; considering essential limits in stead of limits.
\end{rem}

The conditions {\it (i)}, {\it (ii)} and {\it (iii)} in Theorem \ref{t:main} 
all exclude the possibility of sliding consensus, and guarantee asymptotic consensus. The role of each condition will be illustrated in the following examples.

\begin{exmp}\label{ex.condition(i)}
Consider system \eqref{e:node_nonlinear} defined on the undirected graph in Fig.~\ref{fig:3nodesA} with all edge weights one.
Suppose that  $f_1$ is continuous at the origin, so that condition {\it (i)} in Theorem \ref{t:main} is satisfied. 
Then the sliding consensus is not a Filippov solution. Indeed, if at time $t_0$ we have $x(t_0)\in\spa\{\ones\}$, then the first 
component of the Filippov set-valued map $\calF[h](x(t_0))$ is 
equal to $\{0\}$. This implies that $x(t) = x(t_0)$, for all $t \geq t_0$.
\end{exmp}

\begin{exmp}\label{ex:twonodesdigraph}
Consider system \eqref{e:2node_digraph} defined 
on the digraph in Fig.~\ref{fig:di-2nodesA}.
It satisfies condition 
{\it (ii)} of Theorem \ref{t:main}. Since $f_1(0)=0$, the state of the root $v_1$ is constant. Consensus is 
achieved by the fact that $f_2$ is sign preserving.
\end{exmp}

\begin{exmp}\label{ex:tworootsdigraph}
 Consider system \eqref{e:node_nonlinear} defined on the digraph given in 
Fig.~\ref{fig:di-2nodesB} with $a_{12} = a_{21} = 1$.

First, we consider a case in which $f_1$ and $f_2$ satisfy condition {\it (iii)} of Theorem \ref{t:main} and take $f_1 = f_2 = \sign(\cdot)$.
If the trajectory achieves consensus at time $t$, the  Filippov set-valued map $ \calF[h](x(t))$ equals 
$\overline{\mathrm{co}}\{[1,-1]^\top,[-1,1]^\top\}$, which intersects 
$\spa\{\ones\}$ only at $[0,0]^\top$. Hence $  \tilde{\mathcal{L}}_{\calF[h]}V(x)  = \tilde{\mathcal{L}}_{\calF[h]}W(x)= 0$, which implies that the trajectory remains 
constant, i.e., there is no sliding consensus.

Second, we consider a case in which $f_i(0^-)\neq -f_i(0^+)$ for $i =1,2$, which means  that the condition {\it (iii)} of Theorem \ref{t:main} is not satisfied.
In this case, sliding consensus can be a Filippov solution. For instance, take
\begin{equation*}
 f_i(x)  = \left\{ \begin{array}{ll}
2 & \textrm{ if } x>0\\
0 & \textrm{ if } x=0,\\
-1 & \textrm{ if } x<0
\end{array}
\,\quad i=1,2. \right.
\end{equation*}
Suppose that at $t_0$ the state $x$ achieves consensus. Then the Filippov set-valued map at 
$x(t_0)$ is $\overline{\mathrm{co}}\{[-1,2]^\top,[2,-1]^\top\}$ which intersects 
$\spa\{\ones\}$ at $[\frac{1}{2},\frac{1}{2}]^\top$. Then 
$x(t)=\frac{1}{2}\ones t + x(t_0)$ is a Filippov solution for $t\geq t_0$ that exhibits sliding consensus.
\end{exmp}

\subsection{Edge nonlinearity}\label{ss:edge}
In this section we consider the case where the functions $f_i$ are all the identity function, that is,
\begin{align}\label{e:edge_nonlinear}
 \dot{x}_i = \sum_{j=1}^N a_{ij}g_{ij}(x_j-x_i) 
 =: h_i(x), \quad i\in\calI. 
\end{align}
We consider two cases, corresponding to the underlying graph $\calG=\{\calV, \calE \}$ being undirected or
directed, starting with the undirected case.
We introduce the following assumption on the functions $g_{ij}$. 

\begin{assum}\label{as:symmetry_origin}
 For all $e_{ji}\in\calE$, the right and left limits of $g_{ij}$ and $g_{ji}$ at the origin exist, and satisfy $g_{ij}(0^-)=-g_{ji}(0^+)$. 
\end{assum}
To illustrate the need of Assumption \ref{as:symmetry_origin}, we give the following example.
\begin{exmp}\label{ex:counterexmaple_edgenonlinear_undirected}
If $g_{ij}(0^-)\neq-g_{ji}(0^+)$, then sliding consensus may occur. For instance, consider the system \eqref{e:edge_nonlinear} defined on the undirected graph in Fig.~\ref{fig:3nodesA} given by 
\begin{align*}
\dot{x}_1(t) &= g_{12}(x_2(t)-x_1(t))+g_{13}(x_3(t)-x_1(t)) \\ 
\dot{x}_2(t) &= g_{21}(x_1(t)-x_2(t))+g_{23}(x_3(t)-x_2(t)) \\
\dot{x}_3(t) &= g_{31}(x_1(t)-x_3(t))+g_{32}(x_2(t)-x_3(t)) 
\end{align*}
where
\begin{equation*}
 g_{ij}(x)  = \begin{cases}
1.5 & \textrm{ if } x>0,\\
0 & \textrm{ if } x=0,\\
-0.5 & \textrm{ if } x<0,
\end{cases}
\quad \forall e_{ji}\in\calE,
\end{equation*}
Suppose that at time $t_0$ the state $x(t_0)\in\spa\{\ones\}$, then $\calF[h](x(t_0))$ is the closed convex hull spanned by $[-1,1,3]^\top$, $[-1,3,1]^\top$, $[1,-1,3]^\top$, $[3,-1,1]^\top$, $[1,3,-1]^\top$ and $[3,1,-1]^\top$. Hence,  $\ones\in\calF[h](x(t_0))$ and thus $x(t)=t\ones+x(t_0)$ is a  Filippov solution for $t>t_0$.
\end{exmp}

Next, we present the main result of this section.

\begin{thm}\label{t:edge_nonlinear}
 Consider the dynamics \eqref{e:edge_nonlinear} defined on a connected 
undirected graph. Suppose the 
functions $g_{ij}$ satisfy Assumptions \ref{as:signAndPWC} and \ref{as:symmetry_origin}. Then the 
trajectories of the system \eqref{e:edge_nonlinear} achieve consensus 
asymptotically.
\end{thm}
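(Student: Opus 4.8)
The plan is to mimic the structure of the proof of Theorem~\ref{t:main}, using the same pair of Lipschitz, regular candidate Lyapunov functions $V(x)=\max_{i\in\calI}x_i$ and $W(x)=-\min_{i\in\calI}x_i$ from Lemma~\ref{regular_Lipschitz_lyapunov}, and invoking the nonsmooth LaSalle invariance principle (Theorem~2 in \cite{cortes2008}) after establishing strong invariance of sublevel sets. First I would fix a trajectory $x(t)$ of \eqref{e:edge_nonlinear} and set $\alpha(t)=\{k\in\calI\mid x_k(t)=V(x(t))\}$, so that $\partial V(x(t))=\mathrm{co}\{e_k\mid k\in\alpha(t)\}$ and any $a\in\tilde{\mathcal L}_{\calF[h]}V(x(t))$ comes from a $\nu^a\in\calF[h](x(t))$ with $\nu^a_k=a$ for all $k\in\alpha(t)$, exactly as in \eqref{e:nu-alpha}. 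The key sign computation is: for $k\in\alpha(t)$, every neighbour $v_j$ of $v_k$ satisfies $x_j(t)-x_k(t)\le 0$; by the sign-preserving property of $g_{kj}$ (Definition~\ref{de:sign_preserving}), every element of $\calF[g_{kj}](x_j(t)-x_k(t))$ is $\le 0$, and is strictly $<0$ whenever $x_j(t)<x_k(t)$. Using the Cartesian-product containment \eqref{e:Cartesian product} together with property~5 (the sum property) of Theorem~1 in \cite{paden1987}, one gets $\calF[h_k](x(t))\subset(-\infty,0]$, and hence $a\le 0$; thus $\max\tilde{\mathcal L}_{\calF[h]}V(x(t))\le 0$, and symmetrically $\max\tilde{\mathcal L}_{\calF[h]}W(x(t))\le 0$. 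So $V$ and $W$ are nonincreasing, every box $[\min_i x_i(0),\max_i x_i(0)]^n$ is strongly invariant, and in particular every $S_N=\{\|x\|_\infty\le N\}$ is strongly invariant, so LaSalle applies.

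Next I would identify the largest weakly invariant set $M$ inside $S_N\cap\overline{\{0\in\tilde{\mathcal L}_{\calF[h]}V\}}\cap\overline{\{0\in\tilde{\mathcal L}_{\calF[h]}W\}}$. From the sign computation above, $0\in\tilde{\mathcal L}_{\calF[h]}V(x(t))$ forces, for some $\nu\in\calF[h](x(t))$, that $\nu_k=0$ for all $k\in\alpha(t)$; combined with the strict-negativity statement this forces $x_j(t)=x_k(t)$ for every edge $e_{jk}$ with $k\in\alpha(t)$ — i.e.\ the set $\alpha(t)$ of maximizers has no outgoing edges to strictly smaller nodes, hence (by connectedness of the undirected graph, iterating over neighbours) $\alpha(t)=\calI$, so $x(t)\in\spa\{\ones\}$. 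The same conclusion comes from $0\in\tilde{\mathcal L}_{\calF[h]}W$. Therefore $M\subseteq\spa\{\ones\}$, and every trajectory converges to $\spa\{\ones\}$.

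It remains to upgrade ``converges to the consensus subspace'' to ``converges to a \emph{constant} consensus value'', i.e.\ to rule out sliding consensus along $\spa\{\ones\}$ — and this is where Assumption~\ref{as:symmetry_origin} enters and is the main obstacle. On $\spa\{\ones\}$ every argument $x_j-x_i$ is zero, so $\calF[g_{ij}](0)=\overline{\mathrm{co}}\{g_{ij}(0^-),g_{ij}(0^+)\}$, and one must show $\calF[h](\alpha\ones)\cap\spa\{\ones\}=\{0\}$. I would argue that any $\nu\in\calF[h](\alpha\ones)$ is a convex combination over ``sign selections'' on the edges, so $\sum_{i}\nu_i=\sum_{\text{edges}}\big(\text{contribution of }e_{ij}\text{ to }\nu_i\big)+\big(\text{of }e_{ji}\text{ to }\nu_j\big)$; by Assumption~\ref{as:symmetry_origin}, for each unordered edge the paired selected values $\theta\in[g_{ij}(0^-),g_{ij}(0^+)]$ and $\theta'\in[g_{ji}(0^-),g_{ji}(0^+)]=[-g_{ij}(0^+),-g_{ij}(0^-)]$ need not cancel individually, but for $\nu\in\spa\{\ones\}$ all components are equal to a common value $\eta$, and summing gives $n\eta=\sum_i\nu_i$; a more careful edge-by-edge / cut argument (using that on each edge the two selected weights lie in intervals that are reflections of one another through $0$, so the only way to have a \emph{balanced} flow giving equal $\nu_i$ is $\eta=0$) shows $\eta=0$. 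Concretely: if $\nu=\eta\ones\in\calF[h](\alpha\ones)$ with $\eta\ne0$, restrict attention to the set $P=\{i:\text{net selected inflow at }i>0\}$; a standard flow-conservation argument on the cut $(P,\calI\setminus P)$, together with the interval-reflection constraint from Assumption~\ref{as:symmetry_origin}, yields a contradiction, so $\nu=0$. Hence $\tilde{\mathcal L}_{\calF[h]}V$ and $\tilde{\mathcal L}_{\calF[h]}W$ both equal $\{0\}$ only at points where the only admissible velocity in $\spa\{\ones\}$ is $0$; combined with $M\subseteq\spa\{\ones\}$ this forces $\dot x=0$ on $M$, so $M$ consists of equilibria in $\spa\{\ones\}$, and every trajectory converges to one of them — i.e.\ the agents achieve consensus asymptotically. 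I expect the flow/cut argument for eliminating sliding consensus under Assumption~\ref{as:symmetry_origin} to be the technically delicate step; the Lyapunov/LaSalle scaffolding is essentially a direct adaptation of the proof of Theorem~\ref{t:main}.
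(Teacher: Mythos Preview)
Your Lyapunov/LaSalle scaffolding is fine, but the central sign computation is wrong, and this invalidates both your non-increase step and your characterization of the invariant set.

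You claim that for $k\in\alpha(t)$ the sign-preserving property of $g_{kj}$ gives $\calF[g_{kj}](x_j(t)-x_k(t))\subset(-\infty,0]$, and hence, via the Cartesian-product and sum inclusions, $\calF[h_k](x(t))\subset(-\infty,0]$. This fails whenever $k$ has a neighbour $j$ that is \emph{also} a maximizer: then $x_j(t)-x_k(t)=0$, and $\calF[g_{kj}](0)=[g_{kj}(0^-),g_{kj}(0^+)]$ typically contains strictly positive numbers (take $g_{kj}=\sign$). Concretely, on the undirected path $v_1-v_2-v_3$ with all $g_{ij}=\sign$ and $x=(1,1,0)$, approaching along $y_1<y_2$, $y_3<y_2$ gives $h(y)\to(1,-2,1)$, so $(1,-2,1)\in\calF[h](x)$ and $\nu_1=1>0$. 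Thus neither $\calF[h_k](x)\subset(-\infty,0]$ nor your consequent step ``$\nu_k=0$ for all $k\in\alpha(t)$ forces all neighbours of $\alpha(t)$ into $\alpha(t)$'' goes through.

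The paper's proof avoids exactly this trap: instead of bounding each $\nu_k$ individually, it bounds the \emph{sum} $\sum_{k\in\alpha(t)}\nu_k$, and it does so not via the Cartesian/sum over-approximations (which lose the needed information) but via the sequence characterization of $\calF[h]$ (property~1 of Theorem~1 in \cite{paden1987}), restricting to sequences in the open chambers where all coordinates are distinct. Along such a sequence the selections on an undirected edge $\{i,j\}$ are \emph{coupled}: if $y^{k_\ell}_j-y^{k_\ell}_i\uparrow 0$ then $y^{k_\ell}_i-y^{k_\ell}_j\downarrow 0$, so one picks up $g_{ij}(0^-)$ and the other $g_{ji}(0^+)$, and Assumption~\ref{as:symmetry_origin} makes these cancel. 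Consequently the contribution of every edge with both endpoints in $\alpha(t)$ vanishes from the sum, while each edge crossing the cut $(\alpha(t),\calI\setminus\alpha(t))$ contributes a strictly negative term by sign preservation. Since $\nu_k=a$ for all $k\in\alpha(t)$, this yields $|\alpha(t)|\,a<0$ when $x(t)\notin\spa\{\ones\}$ and $|\calI|\,a=0$ when $x(t)\in\spa\{\ones\}$. Two consequences for your write-up: first, Assumption~\ref{as:symmetry_origin} is doing work at \emph{every} state with $|\alpha(t)|\ge 2$, not only on $\spa\{\ones\}$; second, once you have the coupling, the sliding-consensus step is immediate ($\sum_i\nu_i=0$ forces $a=0$), and your proposed flow/cut argument is both unnecessary and, as sketched, unlikely to succeed because in the decoupled over-approximation the paired edge values need not cancel.
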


\begin{proof}
 Consider the Lyapunov candidate functions $V$ and $W$ as defined in \eqref{e:VandW}. We use the same notations as in the proof of Theorem \ref{t:main}. Similarly, as the proof of Theorem \ref{t:main}, we only prove that $\max\tilde{\mathcal{L}}_{\calF[h]}V(x(t))\leq0$ for all $t\in\Psi$ where $\mu(\R_{\geq 0}\setminus\Psi)=0$ and the set $\tilde{\mathcal{L}}_{\calF[h]}V(x(t))\neq\emptyset$ for all $t\in\Psi$. 
 
 By introducing the functions $\varphi_{ji}(x)=(e_j-e_i)^\top x$ for $i,j\in\calI$, the function $h_i(x)$ in \eqref{e:edge_nonlinear} can be rewritten as
 \begin{equation}
 h_i(x)=\sum_{j=1}^{n}a_{ij}(g_{ij}\circ \varphi_{ji})(x).
 \end{equation} 
 Then, using Theorem 1 in \cite{paden1987}, we see that the Filippov set-valued map $\calF[h](x)$ satisfies 
 \begin{align}
 \calF[h](x) & \subset \bigtimes_{i=1}^n\calF[h_i](x) \\
 & \subset \bigtimes_{i=1}^n \sum_{j=1}^n  a_{ij}\calF[g_{ij}](\varphi_{ji}(x)).
 \end{align}
 By property 1 in Theorem 1 in \cite{paden1987}, for each $g_{ij}$ there exists a set $N_{g_{ij}}\subset \R^n$ with $\mu(N_{g_{ij}})=0$ such that
 \begin{equation}\label{e:equi_Fili_set_gij}
 \begin{aligned}
 \calF[g_{ij}](z) = \mathrm{co} \{\lim_{k \rightarrow \infty} h(z^k)
 \mid & \lim_{k \rightarrow \infty} z^k = z \textnormal{ and } \\
 & z^k\notin N_{g_{ij}}\cup N' \} 
 \end{aligned}
 \end{equation}
 for any set $N'$ with $\mu(N') = 0$.
 Similarly, there exists a set $N_h\subset \R^n$ with $\cup_{e_{ij}\in\calE} N_{g_{ij}}\subset N_h$ and $\mu(N_h)=0$ such that
 \begin{equation}\label{e:equi_Fili_set}
 \begin{aligned}
 \calF[h](x(t))=\mathrm{co} \{\lim_{k \rightarrow \infty} h(y^k) \mid & \lim_{k \rightarrow \infty}y^k= x(t),\\
 & y^k\notin N_h\cup S \},
 \end{aligned}
 \end{equation}  
where $S=\{x\in\R^n \mid \exists i,j\in\calI \textnormal{ such that } x_i=x_j \}$, which has measure zero in $\R^n$. Notice that $\R^n\setminus S$ admits a partition $\R^n \setminus S = S_1\cup S_2 \cup \cdots \cup S_{2^n}$, with $S_1,S_2, \ldots, S_{2^n}$  open sets satisfying $S_i\cap S_j=\emptyset$ for all $i\neq j$, such that within a fixed open set $S_i$, the components $y_1,y_2,\ldots,y_n$ of each vector $y\in S_i$ are all different and have the same fixed order.

Now, to study the right-hand side of \eqref{e:equi_Fili_set}, let $t$ be a given time and let $(y^k)$ be a sequence in $\R^n \setminus (N_h\cup S)$ that converges to $x(t)$ for which the limit $\tilde{h} := \lim_{k \rightarrow \infty} h(y^k)$ exists. Note that the existence of $\lim_{k\rightarrow \infty} h(y^k)$ means that all the components $\tilde{h}_i := h_i(y^k)$ have a limit.
We will study the term $\sum_{i\in\alpha(t)}\tilde{h}_i$ in order to derive that $\sum_{i\in\alpha(t)}\nu_i \leq 0$ for each $\nu\in\calF[h](x(t))$.
For this, we first define two sets of edges, namely 
\begin{align}
\calE_1(t) & = \{ e_{ij}\in\calE \mid i,j\in\alpha(t) \}, \\
\calE_2(t) & = \{ e_{ij}\in\calE \mid i\in\alpha(t), j\notin\alpha(t)\}.
\end{align}

The sequence $(y^k )$ has a subsequence $(y^{k_\ell})$ such that $y^{k_\ell}\in S_r$ for all $\ell$ for a fixed $r \in \{1,2,\ldots,2^n\}$. 
For an edge $e_{ij}\in\calE_1$, we have $y^{k_\ell}_i - y^{k_\ell}_j \uparrow 0$ or $y^{k_\ell}_i - y^{k_\ell}_j \downarrow 0$, depending on the set $S_r$.
Therefore, we have
\begin{equation*}
\begin{aligned}
&\lim_{\ell\rightarrow\infty}
[g_{ij}(\varphi_{ji}(y^{k_\ell})), g_{ji}(\varphi_{ij}(y^{k_\ell})) ]
= [g_{ij}(0^-), g_{ji}(0^+)]  \textnormal{ or } \\
&\lim_{\ell\rightarrow\infty} 
[g_{ij}(\varphi_{ji}(y^{k_\ell})), g_{ji}(\varphi_{ij}(y^{k_\ell}))]
= [g_{ij}(0^+), g_{ji}(0^-)].
\end{aligned}
\end{equation*}
Using Assumption \ref{as:symmetry_origin}, we see that in both cases we have
\begin{equation}\label{e:g_ijstuff}
 \lim_{\ell\rightarrow\infty} g_{ij}(\varphi_{ji}(y^{k_\ell})) +  g_{ji}(\varphi_{ij}(y^{k_\ell})) =0. \\
\end{equation}
Now, we can write
\begin{align} 
&\sum_{i\in\alpha(t)} \lim_{k\rightarrow\infty} h_i(y^k) \nonumber\\
 = & \lim_{\ell\rightarrow\infty} \sum_{i\in\alpha(t)} \sum_{j=1}^n a_{ij}g_{ij} (\varphi_{ji}(y^{k_\ell})) \nonumber\\
 = & \lim_{\ell\rightarrow\infty} \big[ \sum_{e_{ij}\in\calE_1} a_{ij}g_{ij} (\varphi_{ji}(y^{k_\ell})) + \sum_{e_{ij}\in\calE_2} a_{ij}g_{ij} (\varphi_{ji}(y^{k_\ell})) \big]\nonumber\\
 = & \frac{1}{2}\sum_{e_{ij}\in\calE_1}\lim_{\ell\rightarrow\infty}  a_{ij}g_{ij} (\varphi_{ji}(y^{k_\ell}))+a_{ji}g_{ji} (\varphi_{ij}(y^{k_\ell})) \nonumber\\
 &+ \lim_{\ell\rightarrow\infty} \sum_{e_{ij}\in\calE_2} a_{ij}g_{ij} (\varphi_{ji}(y^{k_\ell}))  \nonumber\\
 = & \lim_{\ell\rightarrow\infty} \sum_{e_{ij}\in\calE_2} a_{ij}g_{ij} (\varphi_{ji}(y^{k_\ell})),  \label{e:longestequation}
\end{align}
where the last two equalities are implied by the fact that the graph $\calG$ is undirected and by equation \eqref{e:g_ijstuff}. 
Next, we consider two possible cases: $x(t) \notin \spa\{\ones \}$, and $x(t)\in\spa\{\ones \}$. 

First, we look at the case that $x(t) \notin \spa\{\ones \}$, in which case $\calE_2 \neq \emptyset$. 
For an edge $e_{ij}\in\calE_2$ we have $x_j<x_i$, and since $g_{ij}$ is a sign-preserving function, this implies that  $\calF[g_{ij}](x_j-x_i) \subset \R_-$. 
As $y^{k_\ell}\notin N_{g_{ij}}$, all the accumulation points of the sequence $\{g_{ij} (\varphi_{ji}(y^{k_\ell})) \}$ belong to $\calF[g_{ij}](x_j-x_i)$. 
Therefore, we have that $\sum_{i\in\alpha(t)} \lim_{k\rightarrow\infty} h_i(y^k) = \lim_{\ell\rightarrow\infty} \sum_{e_{ij}\in\calE_2} a_{ij}g_{ij} (\varphi_{ji}(y^{k_\ell}))<0$, i.e., $\sum_{i\in\alpha(t)}\tilde{h}_i<0.$
By equation \eqref{e:equi_Fili_set}, we can conclude that $\sum_{i\in\alpha(t)}\nu_i<0$ for any $\nu\in\calF[h](x(t))$. Hence, by observation \eqref{e:nu-alpha}, we have $\tilde{\mathcal{L}}_{\calF[h]}V(x(t))\subset\R_-$. By the fact that $\tilde{\mathcal{L}}_{\calF[h]}V(x(t))$ is closed (see e.g. page 63 in \cite{cortes2008}), we have $\max \tilde{\mathcal{L}}_{\calF[h]}V(x(t))<0.$

Second, we consider the case that $x(t)\in\spa\{\ones\}$, in which case $\calE_1(t)=\calE$ and $\calE_2(t) = \emptyset$. In this case, equation \eqref{e:longestequation} boils down to
\begin{equation}
\sum_{i\in\alpha(t)}\tilde{h}_i =\sum_{i\in\calI}\lim_{k\rightarrow\infty} h_i(y^k) =0.
\end{equation} 
By equation \eqref{e:equi_Fili_set}, we can conclude that $\sum_{i\in\calI}\nu_i=0$ for any $\nu\in\calF[h](x(t))$. This implies that $\tilde{\mathcal{L}}_{\calF[h]}V(x(t))=\{0\}$ since $\frac{1}{n} \ones \in \partial V(x(t))$.

By using the same arguments as above, we can prove that
\begin{enumerate}[(i)]
\item $\max \tilde{\mathcal{L}}_{\calF[h]}W(x(t))<0$ if $x(t) \notin \spa\{\ones \}$,
\item $\tilde{\mathcal{L}}_{\calF[h]}W(x(t))=\{0 \}$ if $x(t) \in \spa\{\ones \}$. 
\end{enumerate}  

The above analysis implies that all trajectories are bounded. Indeed for any 
$N\in\R_+$ the set $S_N=\{x\in\R^n  \mid \|x\|_{\infty}\leqslant 
N \}$ is strongly invariant. By Theorem 2 in \cite{cortes2008}, the conclusion follows. 
\end{proof}

\begin{rem}
A stronger assumption is to assume that  $g_{ij}(y)=-g_{ji}(-y)$ for all $e_{ij} \in\calE$ and all $y\in \R$. This would imply that $\sum_{i\in\calI}h_i(x)=0$ for any $x\in\R^n$. Hence for any $x\in\R^n$ and for any $\nu\in\calF[h](x)$, we have $\ones^\top\nu=0$. Then any Filippov solution $x(t)$ of system \eqref{e:edge_nonlinear} satisfies $\ones^\top\dot{x}(t)=0$. Under the same assumption as in Theorem \ref{t:edge_nonlinear}, the trajectories of \eqref{e:edge_nonlinear} converge to a consensus value defined by the average of the initial condition.
\end{rem}

For the rest of this section, we consider \emph{directed} graphs. 
In this case, Assumption \ref{as:symmetry_origin} is not sufficient to guarantee convergence to 
consensus as shown by the following example.
 
\begin{exmp}\label{ex:counterexmaple_edgenonlinear_directed}
Consider system \eqref{e:edge_nonlinear} on the directed graph as in Fig.~\ref{fig:3nodesB}, where the functions $g_{ij}$ are the signum function. 
Suppose that at time $t_0$, the state satisfies $x(t_0)\in\spa\{\ones\}$. 
Then the Filippov set-valued map $\calF[h](x(t_0))$ is the same as in 
\eqref{e:filippovset-example}. Hence, by 
the same argument as in Example \ref{example_undi_3nodes}, there are Filippov solutions that exhibit sliding consensus.
\end{exmp}

For digraphs, we quote the following result from \cite{Papachristodoulou2010} for the case that the functions $g_{ij}$ are continuous.

\begin{thm}\label{Papachristodoulou2010}
 Consider the system \eqref{e:edge_nonlinear} with continuous functions $g_{ij}$. 
If the underlying graph $\calG=\{\calV, \calE \}$ contains a directed spanning tree, 
then the trajectories of \eqref{e:edge_nonlinear} achieve consensus asymptotically.
\end{thm}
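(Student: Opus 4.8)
The plan is to follow the pattern of the proofs of Theorems~\ref{t:main} and~\ref{t:edge_nonlinear}, using the nonsmooth Lyapunov candidates $V(x)=\max_{i\in\calI}x_i$ and $W(x)=-\min_{i\in\calI}x_i$ from Lemma~\ref{regular_Lipschitz_lyapunov} together with the nonsmooth LaSalle invariance principle (Theorem~2 in \cite{cortes2008}). Because every $g_{ij}$ is continuous, the right-hand side $h$ of \eqref{e:edge_nonlinear} is continuous, so $\calF[h](x)=\{h(x)\}$ and the Filippov solutions coincide with the ordinary solutions; moreover the set-valued Lie derivative reduces to $\tilde{\mathcal{L}}_{\calF[h]}V(x)=\{h_i(x)\mid i\in\alpha(x)\}$ when all these numbers agree and $\tilde{\mathcal{L}}_{\calF[h]}V(x)=\emptyset$ otherwise, where $\alpha(x)=\{k\in\calI\mid x_k=\max_{\ell}x_\ell\}$; similarly for $W$ with $\beta(x)=\{k\in\calI\mid x_k=\min_\ell x_\ell\}$.

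First I would establish monotonicity. For $i\in\alpha(x)$ every difference $x_j-x_i$ is nonpositive, and since $g_{ij}$ is sign preserving this forces $g_{ij}(x_j-x_i)\le 0$, hence $h_i(x)=\sum_j a_{ij}g_{ij}(x_j-x_i)\le 0$. Therefore $\max\tilde{\mathcal{L}}_{\calF[h]}V(x)\le 0$ for every $x$, and the symmetric computation on $\beta(x)$ gives $\max\tilde{\mathcal{L}}_{\calF[h]}W(x)\le 0$. Consequently $V(x(t))$ and $W(x(t))$ are nonincreasing along every trajectory, so each trajectory stays in the compact set $[\min_i x_i(0),\max_i x_i(0)]^n$ and every $S_N=\{x\mid\|x\|_\infty\le N\}$ is strongly invariant. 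By Theorem~2 in \cite{cortes2008}, every trajectory starting in $S_N$ converges to the largest weakly invariant set $M$ contained in $S_N\cap\overline{\{x\mid 0\in\tilde{\mathcal{L}}_{\calF[h]}V(x)\}}\cap\overline{\{x\mid 0\in\tilde{\mathcal{L}}_{\calF[h]}W(x)\}}$.

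The crux is the inclusion $M\subseteq\spa\{\ones\}$, and this is where the directed-spanning-tree hypothesis is used. Suppose $0\in\tilde{\mathcal{L}}_{\calF[h]}V(x)$; taking $\zeta=e_i$ in the definition gives $h_i(x)=0$ for every $i\in\alpha(x)$. For such $i$ the sum $\sum_j a_{ij}g_{ij}(x_j-x_i)$ has only nonpositive summands, so each summand vanishes, whence $x_j=x_i$ for every in-neighbour $v_j$ of $v_i$; that is, $\alpha(x)$ is closed under taking in-neighbours. A nonempty vertex set closed under in-neighbours must contain every root of $\calG$ (walk backwards along the directed path from the root to any vertex of the set), so $x_r=\max_\ell x_\ell$ for every root $v_r$. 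Since the conditions ``$x_r=x_{r'}$ for all roots $r,r'$'' and ``$x_r\ge x_k$ for all $k$ and every root $r$'' are closed, the same holds on $\overline{\{x\mid 0\in\tilde{\mathcal{L}}_{\calF[h]}V(x)\}}$; symmetrically $x_r=\min_\ell x_\ell$ for every root $v_r$ on $\overline{\{x\mid 0\in\tilde{\mathcal{L}}_{\calF[h]}W(x)\}}$. Hence on $M$ we have $\max_\ell x_\ell=x_r=\min_\ell x_\ell$ for any root $v_r$, i.e. $M\subseteq\spa\{\ones\}$; since every point of $\spa\{\ones\}$ is an equilibrium of \eqref{e:edge_nonlinear} (as $g_{ij}(0)=0$), the trajectories converge to a consensus point, with $\max_i x_i(t)$ and $\min_i x_i(t)$ tending to the common limit by monotonicity.

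I expect no serious analytic obstacle; the only point that really needs the hypothesis is the step ``$0\in\tilde{\mathcal{L}}_{\calF[h]}V(x)\Rightarrow h_i(x)=0$ componentwise for $i\in\alpha(x)$'', which uses continuity of the $g_{ij}$ through $\calF[h](x)=\{h(x)\}$. For discontinuous $g_{ij}$ on a digraph this step fails --- $\calF[h](x)$ may contain a vector with vanishing components on $\alpha(x)$ without any $h_i$ being zero --- and Example~\ref{ex:counterexmaple_edgenonlinear_directed} shows that sliding consensus can then occur. An alternative route, avoiding the invariance principle, is to track $\max_i x_i(t)-\min_i x_i(t)$ and prove a uniform decrease over time windows by propagating contraction of the extreme values along the spanning tree, but this seems to require more quantitative estimates than the LaSalle argument above.
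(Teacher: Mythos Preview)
The paper does not give its own proof of this theorem: it is explicitly quoted from \cite{Papachristodoulou2010} (``we quote the following result from \cite{Papachristodoulou2010}'') and stated without argument. So there is no paper-proof to compare against.

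Your argument is correct and is exactly the specialization of the paper's Lyapunov scheme (Theorems~\ref{t:main} and~\ref{t:edge_nonlinear}) to the continuous case. Continuity of the $g_{ij}$ collapses $\calF[h](x)$ to $\{h(x)\}$, so $0\in\tilde{\mathcal{L}}_{\calF[h]}V(x)$ forces $h_i(x)=0$ for every $i\in\alpha(x)$; then each nonpositive summand $a_{ij}g_{ij}(x_j-x_i)$ vanishes, and sign preservation plus continuity give $x_j=x_i$ for every in-neighbour $j$ of $i$. The step ``$\alpha(x)$ closed under in-neighbours $\Rightarrow \calI_r\subseteq\alpha(x)$'' via back-tracing along the directed path from a root is exactly the mechanism used in Theorem~\ref{t:main} for the case $\calI_r\nsubseteq\alpha(t)$, and the closure argument to pass from $\{x\mid 0\in\tilde{\mathcal{L}}_{\calF[h]}V(x)\}$ to its closure is clean. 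Your closing remark that continuity is precisely what fails in Example~\ref{ex:counterexmaple_edgenonlinear_directed} matches the paper's comment that the discontinuous digraph case remains open.
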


Extension of Theorem \ref{Papachristodoulou2010} to the case of discontinuous functions $g_{ij}$ is a topic for further research.

\subsection{Combining node and edge nonlinearities} \label{ss:combine}
The multi-agent system given in \eqref{e:generalsystem} can be seen as a combination of system 
\eqref{e:node_nonlinear} and system \eqref{e:edge_nonlinear}.
For this system, we have the following result.

\begin{thm}\label{t:combined}
 Consider system \eqref{e:generalsystem} defined on a digraph 
$\calG=\{\calV,\calE \}$, with continuous functions $g_{ij}$. If one of the 
following three conditions holds, i.e., 
\begin{enumerate}[(i)]
\item $\calI_r\cap\calI_c$ is not empty,
\item $|\calI_r|=1$,
\item $|\calI_r|=2$, $f_i(0^-)$ and $f_i(0^+)$ exist, and $f_i(0^-)=-f_i(0^+)$ for $i\in\calI_r$, 
\end{enumerate}
then all Filippov solutions of system \eqref{e:node_nonlinear}
 achieve consensus asymptotically, for all initial conditions.
\end{thm}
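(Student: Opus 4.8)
The plan is to run the proof of Theorem~\ref{t:main} with essentially no change of structure: use the same regular, Lipschitz candidate functions $V(x)=\max_i x_i$ and $W(x)=-\min_i x_i$ of \eqref{e:VandW}, the same generalized gradients, and the same set-valued LaSalle argument (Theorem~2 in \cite{cortes2008}), and only revisit the two points where the linear term $-L_{j\cdot}x$ of \eqref{e:node_nonlinear_fili} is now replaced by the nonlinear, but \emph{continuous}, term $z_j(x):=\sum_{l}a_{jl}g_{jl}(x_l-x_j)$. Writing $h_i=f_i\circ z_i$ and $z=(z_1,\ldots,z_n)$, continuity of the $g_{ij}$ makes each $z_i$ continuous, so $\calF[h](x)$ is contained in the Cartesian product of the $\calF[h_i](x)$ by \eqref{e:Cartesian product}, and each factor $\calF[h_i](x)$ is dictated by the behaviour of $f_i$ near the single real number $z_i(x)$.

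Fix a trajectory $x(t)$, a time $t$ at which $\dot x(t)$ and $\tfrac{d}{dt}V(x(t))$ exist, and set $\alpha(t)=\{k:x_k(t)=V(x(t))\}$. If $\calI_r\nsubseteq\alpha(t)$ I pick, exactly as in Theorem~\ref{t:main}, a root $v_i$ with $x_i(t)<V(x(t))$ and, along a shortest path from $v_i$ into $\alpha(t)$, an index $j\in\alpha(t)$ with an in-neighbour $v_k$ satisfying $x_k(t)<x_j(t)$; since $x_l(t)\le x_j(t)$ for every $l$, the sign-preserving property of the $g_{jl}$ makes every summand in $z_j(x(t))$ nonpositive and the one at $l=k$ strictly negative, i.e.\ $z_j(x(t))<0$. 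By continuity of $z_j$ there is $\delta>0$ with $z_j(B(x(t),\delta))$ contained in a compact interval inside $\R_-$, from which one obtains $\calF[h_j](x(t))\subset\R_-$ (the delicate step, discussed below); together with \eqref{e:nu-alpha} applied at $k=j$ this gives $\max\tilde{\mathcal{L}}_{\calF[h]}V(x(t))<0$. If instead $\calI_r\subseteq\alpha(t)$ I recycle the three sub-arguments of Theorem~\ref{t:main}: \eqref{e:rootneighbors} still holds with $-Lx$ replaced by $z(\cdot)$, so for a root $v_k$ one has $z_k(x)=\sum_{l\in\calI_r}a_{kl}g_{kl}(x_l-x_k)$; in cases~(i) and (ii) the relevant root component satisfies $z_i(x(t))=0$ — identically $z_i\equiv0$ in case~(ii), and with $f_i$ continuous at $0$ and $f_i(0)=0$ in case~(i), so $h_i$ is continuous at $x(t)$ — whence $\calF[h_i](x(t))=\{0\}$; and in case~(iii), with $\calI_r=\{i,j\}$ and $x_i(t)=x_j(t)$, continuity of $g_{ij},g_{ji}$ together with $g_{ij}(0)=g_{ji}(0)=0$ forces every $\nu\in\calF[h](x(t))$ to satisfy $[\nu_i,\nu_j]\in\overline{\mathrm{co}}\{[f_i(0^-),f_j(0^+)],[f_i(0^+),f_j(0^-)]\}$, exactly the segment appearing in Theorem~\ref{t:main}, which by $f_i(0^-)=-f_i(0^+)$ meets $\spa\{[1,1]^\top\}$ only at the origin. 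In every case \eqref{e:nu-alpha} yields $\tilde{\mathcal{L}}_{\calF[h]}V(x(t))=\{0\}$, and the corresponding statements for $W$ follow verbatim with $\beta(t)=\{k:x_k(t)=-W(x(t))\}$.

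It then follows that $V$ and $W$ are nonincreasing, so every trajectory remains in the box $[\min_i x_i(0),\max_i x_i(0)]^n$ and is bounded, every set $\{\|x\|_\infty\le N\}$ is strongly invariant, and Theorem~2 in \cite{cortes2008} drives every trajectory to the largest weakly invariant set contained in $\overline{\{x:0\in\tilde{\mathcal{L}}_{\calF[h]}V(x)\}}\cap\overline{\{x:0\in\tilde{\mathcal{L}}_{\calF[h]}W(x)\}}$; by the case analysis this set forces $\calI_r\subseteq\alpha(t)\cap\beta(t)$ at every time, hence each root is simultaneously maximal and minimal and $x(t)\in\spa\{\ones\}$ there, while monotonicity of $V$ and $W$ upgrades this to convergence to a single point $\eta\ones$ (alternatively, once the roots are shown to agree, Theorem~\ref{Papachristodoulou2010} propagates agreement to their descendants, using continuity of the $g_{ij}$). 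The main obstacle is precisely the step $\calF[h_j](x(t))\subset\R_-$ in the case $\calI_r\nsubseteq\alpha(t)$: because $z_j$ is merely continuous and not a $C^1$ submersion, one cannot pass $\calF$ through the composition $f_j\circ z_j$ as was done in Theorem~\ref{t:main} via property~4 of Theorem~1 in \cite{paden1987}; instead one uses that $z_j(x(t))<0$, together with upper semicontinuity of $y\mapsto\calF[f_j](y)$, to reduce matters to $\calF[f_j]$ evaluated near $z_j(x(t))$, where the hypothesis $\min y\,\calF[f_j](y)>0$ guarantees $\calF[f_j](y)\subset\R_-$ for all $y<0$.
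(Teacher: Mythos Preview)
Your proposal is correct and follows essentially the same approach as the paper's proof, which is itself only a brief sketch referring back to Theorem~\ref{t:main}. You are in fact more explicit than the paper on the one genuinely new point, namely why $\calF[h_j](x(t))\subset\R_-$ when $z_j(x(t))<0$: the paper simply asserts this implication, whereas you correctly observe that property~4 of Theorem~1 in \cite{paden1987} no longer applies (since $z_j$ is merely continuous rather than a full-rank linear map) and indicate the workaround via continuity of $z_j$ together with the sign-preserving hypothesis $\min y\,\calF[f_j](y)>0$.
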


\begin{proof}
Since the proof is similar to the proof of Theorem \ref{t:main}, 
we only provide a sketch of the proof. Recall that $\alpha(t)=\{i\in\calI\mid x_i(t)=V(x(t))\}$  and $\beta(t)=\{i\in\calI\mid x_i(t)=-W(x(t))\}$.
Let $V$ and $W$ be candidate Lyapunov functions. We will show that 
$\max \tilde{\mathcal{L}}_{\calF[h]}V\leqslant 0$ by considering two 
cases: $\calI_r\nsubseteq\alpha(t)$ and $\calI_r\subseteq\alpha(t)$. 

When $\calI_r\nsubseteq\alpha(t)$, there exists at least one $k\in\alpha(t)$ 
with $\sum_{j=1}^n a_{kj}g_{kj}(x_j-x_k)<0$, which implies the $k$th 
component of $\calF[h](x(t))$ is in $\R_-$. 
Hence, $\max \tilde{\mathcal{L}}_{\calF[h]}V<0$.

When $\calI_r\subseteq\alpha(t)$,  we can use similar arguments as in the proof of Theorem \ref{t:main} to see that
the set-valued Lie derivative $\tilde{\mathcal{L}}_{\calF[h]}V(x(t))$ is either 
$\{0\}$ or $\emptyset$ if one of the conditions {\it (i)}, {\it 
(ii)} and {\it (iii)} holds. Hence $\max 
\tilde{\mathcal{L}}_{\calF[h]}V(x(t))\leqslant0$.

Similarly, we have that $\max \tilde{\mathcal{L}}_{\calF[h]}W(x(t))<0$ if $\calI_r\nsubseteq\beta(t)$,  
and  $\max \tilde{\mathcal{L}}_{\calF[h]}W(x(t))\leqslant 0$ if $\calI_r\subseteq\beta(t)$.
Based on Theorem 2 in \cite{cortes2008}, the conclusion follows.
\end{proof}

\section{A port-Hamiltonian perspective on consensus error dynamics}\label{s:PHformulation}

An alternative approach to consensus analysis is to consider the dynamics of the `error' vector $z=-Lx$. In many cases, the convergence of $x$ to ordinary, \emph{static}, consensus is equivalent to the convergence of $z$ to the origin. On the other hand, in the previous section it was shown that for differential inclusions this equivalence does not necessarily hold. In the present section we first provide sufficient conditions which guarantee asymptotic stability of the origin for the error dynamics for compatible initial conditions. Combining this with Theorem \ref{t:main}, we then formulate sufficient conditions for the equivalence between convergence of $z$ to the origin and of $x$ to static consensus.

\begin{thm}\label{t:pH}
Given system \eqref{e:node_nonlinear_fili},  the error $z = - Lx$ satisfies
\begin{align}\label{e:error}
\dot{z}\in -L\calF[f](z).
\end{align} 
If $\calG$ is strongly connected and the functions $f_i$ are sign-preserving and non-decreasing, then all Filippov solutions of \eqref{e:error} for $z(0)\in\im L$ converge to the origin.
\end{thm}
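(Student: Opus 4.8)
The plan is to use a port-Hamiltonian / passivity argument centered on the energy function built from the integrals of the $f_i$. First I would verify the differential inclusion \eqref{e:error}: since $z = -Lx$ and $\dot x \in \calF[h](x)$ with $h = f(-Lx)$, applying $-L$ and using the chain-rule-type inclusion for Filippov maps (property of \cite{paden1987}) gives $\dot z \in -L\calF[f](z)$. The restriction to $z(0) \in \im L$ is natural because $\im L$ is invariant under \eqref{e:error}, and on strongly connected digraphs $\im L = \ones^\perp$ has the right dimension $n-1$; the consensus/static-equilibrium set corresponds to $z = 0$.

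The key step is to introduce the storage function $S(z) = \sum_{i=1}^n \int_0^{z_i} f_i(s)\,ds$. Since each $f_i$ is sign-preserving and non-decreasing, $f_i$ is monotone with $f_i(s)s > 0$ for $s\neq 0$, so each integrand is nonnegative and $S$ is locally Lipschitz, convex-like (nondecreasing $f_i$ makes each term convex), regular, with $S(z) \ge 0$ and $S(z) = 0$ iff $z = 0$. Its generalized gradient satisfies $\partial S(z) \subseteq \bigtimes_i \calF[f_i](z_i) \supseteq$ the relevant selections; more precisely for any $\nu \in \calF[f](z)$ one has $\nu \in \partial S(z)$ (up to the standard measure-zero caveats). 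Then along solutions, for $w = -L\nu \in -L\calF[f](z)$ and any $\zeta \in \partial S(z)$, the set-valued Lie derivative element is $\zeta^\top w = -\zeta^\top L \nu$. Choosing the selection $\zeta = \nu$ (legitimate because $\nu \in \calF[f](z) \cap \partial S(z)$) gives $-\nu^\top L\nu \le 0$, since for a strongly connected digraph $L + L^\top$ is positive semidefinite on $\ones^\perp$ (the Laplacian of a strongly connected digraph has its symmetric part PSD with kernel exactly $\spa\{\ones\}$ when suitably normalized; more carefully, $\nu^\top L \nu = \tfrac12 \nu^\top(L+L^\top)\nu \ge 0$). Hence $\tilde{\mathcal L}_{-L\calF[f]}S(z) \subseteq (-\infty,0]$ and $S$ is nonincreasing along all Filippov solutions.

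The remaining step is a LaSalle invariance argument via Theorem 2 in \cite{cortes2008}: sublevel sets of $S$ intersected with $\im L$ are compact and strongly invariant, so every solution converges to the largest weakly invariant set inside $\{z \in \im L : 0 \in \tilde{\mathcal L}_{-L\calF[f]}S(z)\}$. On this set we need $\nu^\top L \nu = 0$, forcing $\nu \in \ker(L+L^\top) = \spa\{\ones\}$ for every admissible selection $\nu \in \calF[f](z)$; combined with sign-preservation of the $f_i$ (which pins down the sign of $\nu_i$ to match $z_i$) one argues that $\calF[f](z) \cap \spa\{\ones\}$ can only be consistent with $z = 0$, or else that on the invariant set $\dot z = -L\nu$ with $\nu \in \spa\{\ones\}$ gives $\dot z = 0$, so $z$ is constant, and then sign-preservation forces $z_i = 0$ for all $i$. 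Thus $M = \{0\}$ and all solutions converge to the origin. The main obstacle I anticipate is the last step: carefully showing that the only point of the invariant set compatible with both the selection lying in $\spa\{\ones\}$ and the sign-preserving constraints is the origin — this is where the discontinuity of the $f_i$ and the precise structure of $\calF[f](z)$ (a box product of intervals) must be handled, analogously to the two-roots case in the proof of Theorem~\ref{t:main}, and where strong connectivity (not merely a spanning tree) is genuinely used.
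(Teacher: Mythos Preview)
Your overall strategy matches the paper's: build a Lyapunov function from the integrals of the $f_i$, show its set-valued Lie derivative is nonpositive, apply the nonsmooth LaSalle principle, and then use sign-preservation together with the constraint $z\in\im L$ to identify the invariant set as $\{0\}$. However, there is a genuine gap in the dissipation step.

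You take $S(z)=\sum_i \int_0^{z_i} f_i(s)\,ds$ and claim that $-\nu^\top L\nu\le 0$ because ``for a strongly connected digraph $L+L^\top$ is positive semidefinite.'' This is false in general: for a strongly connected but \emph{non-balanced} digraph the symmetric part of $L$ need not be positive semidefinite (a simple three-node cycle with one heavily weighted extra edge already violates it). Your parenthetical ``when suitably normalized'' is exactly the missing ingredient, but it is not a cosmetic normalization; it changes the Lyapunov function itself. The paper uses the strictly positive left eigenvector $\sigma$ of $L$ (guaranteed by strong connectivity) and sets $V_1(z)=\sigma^\top F(z)=\sum_i \sigma_i\int_0^{z_i}f_i(s)\,ds$. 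Then $\partial V_1(z)=\Sigma\,\calF[f](z)$ with $\Sigma=\diag(\sigma)$, and along any selection $\nu\in\calF[f](z)$ the relevant inner product is $-\nu^\top\Sigma L\nu$. Because $\ones^\top\Sigma L=\sigma^\top L=0$ and $\Sigma L\ones=0$, the matrix $\Sigma L$ is the Laplacian of a \emph{balanced} graph, and only then is its symmetric part guaranteed positive semidefinite. Without the $\sigma$ weights your computation of $\tilde{\mathcal L}S$ does not give nonpositivity.

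A related slip: you write $\im L=\ones^\perp$, but for non-balanced digraphs $\im L=(\ker L^\top)^\perp=\sigma^\perp$. This matters in the final step: when $\nu\in\spa\{\ones\}\cap\calF[f](z)$, sign-preservation forces all $z_i$ to share a weak sign, and then $\sigma^\top z=0$ with $\sigma>0$ gives $z=0$. Your alternative route (``$\dot z=-L\nu=0$ so $z$ is constant, then sign-preservation forces $z=0$'') does not close by itself, since constancy of $z$ with all $z_i\ge 0$ does not imply $z=0$ without the orthogonality constraint. Once you insert the $\sigma$ weighting, both the dissipation inequality and the LaSalle identification go through exactly as in the paper.
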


\begin{proof}
By almost everywhere differentiability of $z(\cdot)$
\begin{align*}
\dot{z}(t) & = -L\dot{x}(t) \\
& \in  -L \calF[f(-Lx)](x(t)) \\
& \subset -L \calF[f](z(t)),
\end{align*}
where the last inclusion holds by \cite[Theorem 1 (1)]{paden1987} for any $z(t)=-Lx(t)$. 
Next, we prove asymptotic stability of the origin for any $z(0)\in\im L$. We only provide the sketch of the proof. First denote $F(y)=[F_1(y_1),F_2(y_2),\ldots, F_n(y_n)]^\top$ for any $y\in\R^n$, where $F_i(y_i):=\int_{0}^{y_i}f_i(s)ds$ is convex and radially unbounded, since $f_i$ is sign-preserving and non-decreasing. Furthermore, since $\calG$ is strongly connected, there exists $\sigma\in\R_+^n$ such that $\sigma^\top L=0$ \cite[Theorem 14]{Bollobas98}. Consider $V_1(z)=\sigma^\top F(z)$ as Lyapunov function, which is convex and hence regular. Then since $F_i$ is differentiable almost everywhere Theorem 1 (1) in \cite{paden1987} implies that the generalized gradient of $V_1$ is 
\begin{align}
\partial V_1(z) & = \bigtimes_{i=1}^n [\sigma_if_i(z^-_i),\sigma_if_i(z^+_i)] \\
& =  \Sigma \calF[f](z),
\end{align}
where $\Sigma = \diag (\sigma_1,\ldots, \sigma_n)$.
Then for any $a\in \tilde{\mathcal{L}}_{-L\calF[f]}V_1(z)$ there exists $\nu\in\calF[f](z)$ such that $a = -\nu^\top \Sigma L\nu $.  Since $\Sigma L$ is the Laplacian matrix of a balanced graph, and the symmetric part of $\Sigma L$ is positive semidefinite \cite[Theorem 1.37]{DistCtrlRobotNetw}, we have $a \leq 0$. Thus we have shown that if $\tilde{\mathcal{L}}_{-L\calF[f]}V_1(z)\neq \emptyset$, then it belongs to $\R_{\leq 0}$.
Furthermore, by Theorem 2 in \cite{cortes2008} all solutions of \eqref{e:error} converge to $\Omega =\overline{\{z \mid 0\in\tilde{\mathcal{L}}_{-L \calF[f]}V_1(z)\}}$ asymptotically. By computing  $\tilde{\mathcal{L}}_{-L \calF[f]}V_1(z)$ it follows that $\Omega=\overline{\{z \mid  \spa 
\{\ones\}\subset \calF[f](z) \}}$. Since $z(t) \in\im L$ for all $t$ and the functions $f_i$ are sign-preserving, we obtain $\Omega=\{0\}$.

\end{proof}

\begin{rem}
The stability of the system \eqref{e:error} can be approached from the following point of view. By using the new coordinates  $w=\Sigma z$ we can write 
\begin{align}
 \dot{w}  \in  -\Sigma L \frac{\partial H}{\partial w}(w) = -(J+R)\frac{\partial H}{\partial w}(w), \label{e:w-PH}
\end{align}
where $H(w):=\sigma^\top F(\Sigma^{-1}w)$, and $J$ and $R$ are the skew-symmetric and symmetric parts of $\Sigma L$. The system \eqref{e:w-PH} is a generalized (differential inclusion) port-Hamiltonian system \cite{Arjan2014ph}. Thus the Lyapunov function $V_1$ in the new coordinates $w$ is nothing else than the Hamiltonian of this port-Hamiltonian system. 
\end{rem}

\begin{exmp} 
The system \eqref{e:node_nonlinear} and the error dynamics \eqref{e:error} resulting from $z=-Lx$ can be illustrated as follows. 
Consider a hydraulic network with $x_i$ being the pressure at the $i$-th node, where the flow through the pipe from node $i$ to node $j$ is linearly dependent on $x_i-x_j$. Then the flow extracted/injected at the $i$-th node equals the $i$-th component of $z=-Lx$, where $L$ is a symmetric Laplacian matrix. System \eqref{e:node_nonlinear} is obtained by assuming that the rate of increase of $x_i$ depends on $z_i$ through the function $f_i$. Obviously the pressures will converge to consensus if and only if the vector of flows $z$ at the nodes converges to zero. $H$ is the storage function resulting from summing the integrals of $f_i$. Non-symmetric Laplacian matrices $L$ may occur in other types of transportation networks \cite{SchSC15}.
\end{exmp}

\begin{exmp}\label{example_undi_3nodes_continue}
Consider the system in Example \ref{example_undi_3nodes}. Its error dynamics is given as 
$\dot{z}\in -L\calF[\sign](z).$
Hence $\lim_{t\rightarrow\infty}z(t)=0, \forall z(0)\in\im L$. Indeed, following the proof of Theorem \ref{t:pH} the Lyapunov function is the $1$-norm $V_1(z)=\|z\|_1$.   
\end{exmp}

As shown by Examples \ref{example_undi_3nodes} and \ref{example_undi_3nodes_continue} the convergence of the error vector $z$ to zero does not necessarily imply the convergence of $x$ to consensus, since `sliding' consensus may occur. 
Hence, one must be careful to derive the convergence of $x$ to consensus by analyzing the error vector $z$.  In the following remark, we combine Theorem \ref{t:main} and \ref{t:pH} to obtain sufficient conditions for when convergence of the error vector $z$ to the origin guarantees (static) consensus of $x$.

\begin{rem}
Consider systems \eqref{e:node_nonlinear_fili} and \eqref{e:error} defined on a strongly connected digraph. Suppose the functions $f_i$ are sign-preserving and non-decreasing. Then if one of the following conditions hold 
\begin{enumerate}[(i)]
\item $\calI_c$ is non-empty,
\item 
$|\calI|=2$, $f_i(0^-)$ and $f_i(0^+)$ exist and $f_i(0^-)=-f_i(0^+)$ for $i\in\calI$, 
\end{enumerate}
then $z(t)$ converges to the origin for any $z(0)=-Lx(0)$ and $x(t)$ converges to consensus for any $x(0)$.
\end{rem}

\section{Conclusion}\label{s:conclusion}

In this paper, we considered a very general model of multi-agent  systems defined on a directed graph, with nonlinear discontinuous functions defined on the nodes and edges.  
Since the right-hand sides of the differential equations are discontinuous, we interpreted the solutions in the Filippov sense. Under the crucial assumptions of $(i)$ the graph containing a directed spanning tree, $(ii)$ all nonlinear functions to be sign-preserving, we provided sufficient conditions for all Filippov solutions of the consensus protocol to achieve consensus asymptotically. 
Furthermore, a common approach in the study of consensus, namely the error dynamics, was analyzed. 
A sufficient condition was given to guarantee the convergence of the error to zero. For differential inclusions, however, it was shown that convergence of the error to zero is not equivalent to the convergence of the original states to consensus, and that one has to be careful in order to avoid `sliding' consensus.

\bibliographystyle{plain}        
\bibliography{ifacconf}

\end{document}